\documentclass[leqno,12pt]{amsart}
\textwidth=35cc
\oddsidemargin 15pt
\evensidemargin 15pt
\textheight 620pt
\headheight 10pt
\headsep 10pt
\topmargin 0pt
\footskip 40pt
\parskip 0pt
\usepackage{setspace}
\usepackage[english]{babel}
\usepackage{bussproofs}
\usepackage{csquotes}
\usepackage{dirtytalk}
\usepackage{mathtools}
\usepackage{mathrsfs}
\usepackage{latexsym}
\usepackage{enumitem}
\usepackage{amsthm}
\usepackage{amssymb}
\DeclareMathAlphabet{\mathpzc}{OT1}{pzc}{m}{it}
\usepackage[latin1]{inputenc}
\usepackage{afterpage}

\usepackage{bbm}
\usepackage[rgb,dvipsnames]{xcolor}
\usepackage{tikz} 
\usetikzlibrary{decorations.text}
\usetikzlibrary{arrows,arrows.meta,hobby}
\usetikzlibrary{shapes.misc, positioning}

\usepackage{tikz-cd}

\usetikzlibrary{cd}




\newfont{\ssi}{cmssi12 at 12pt}




\newenvironment{ea*}{\begin{eqnarray*}}{\end{eqnarray*}}

\setbox0=\hbox{$\longrightarrow$}

\newcommand{\bG}{{\bar{G}}}

\newtheorem{theorem}{Theorem}[section]

\newtheorem{lemma}[theorem]{Lemma}

\newtheorem{corollary}[theorem]{Corollary}
\newtheorem{observation}[theorem]{Observation}

\newtheorem{claim}[theorem]{Claim}
\theoremstyle{definition}
\newtheorem{definition}[theorem]{Definition}

\theoremstyle{remark}

\newtheorem{question}{Question}

\def\hook{\upharpoonright}
\def\forces{\Vdash}

\def\ZFC{\mathsf{ZFC}}

\def\PFA{\mathsf{PFA}}
\def\MA{\mathsf{MA}}

\def\DCFA{\mathsf{DCFA}}

\def\mfc{\mathfrak{c}}

\def \mfd{\mathfrak{d}}

\def\CH {\mathsf{CH}}
\def\P{\mathbb P}
\def\:{\mathrel{\lower.04em\hbox{\rlap{$\cdot$}}{:}}}

\usepackage[margin=1.3in]{geometry}

\begin{document}

\title{Specializing Wide Aronszajn Trees Without Adding Reals}

\author[Switzer]{Corey Bacal Switzer \\ The Graduate Center of the City University of New York}
\address[C.~B.~Switzer]{Mathematics, The Graduate Center of The City University of New York, 365 Fifth Avenue, New York, NY 10016}
\email{cswitzer@gradcenter.cuny.edu}
\urladdr{https://coreyswitzer.wordpress.com/}

\date{}

\maketitle

\begin{abstract}
We show that under certain circumstances wide Aronszajn trees can be specialized iteratively without adding reals. We then use this fact to study forcing axioms compatible with $\CH$ and list some open problems.
\end{abstract}

\section{Introduction}

The purpose of this note is to prove a technical strengthening of a theorem of Shelah's on specializing Aronszajn trees and connect it to some open problems in iterated forcing and the continuum hypothesis. Specializing Aronszajn trees iteratively without adding reals goes back to the work of Jensen separating $\CH$ from $\diamondsuit$, see \cite{DevJohSH}. Reworking this result, in \cite[Chapter V]{PIPShelah} Shelah introduced the class of dee-complete and ${<}\omega_1$-proper forcing notions, an iterable class which does not add reals and showed that there is a forcing notion in this class that specializes Aronszajn trees. However the countability of the levels of the trees is essential in Shelah's proof in contrast to the ccc specializing forcing introduced in \cite{BMR70}, which adds reals but where the width of the tree plays no role. Therefore it remains unclear when one can specialize wider trees without adding reals. In this note we provide a partial solution to this problem by proving that under certain circumstances there are dee-complete and ${<}\omega_1$-proper posets to specialize wide trees. Specifically we show the following.

\begin{theorem}
Suppose $T$ is an $\omega_1$-tree (countable levels, but potentially uncountable branches) and $S \subseteq T$ is a wide Aronszajn tree with the induced suborder. Then there is a forcing notion $\mathbb P = \mathbb P_{S, T}$ which specializes $S$ and is dee-complete and ${<}\omega_1$-proper.
\label{specwidetree}
\end{theorem}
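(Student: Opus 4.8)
The plan is to build $\mathbb{P}_{S,T}$ as a forcing of \emph{countable} specializing approximations equipped with promises, in the spirit of Shelah's specialization forcing for narrow Aronszajn trees, but with all the bookkeeping organized along the countable levels of the ambient $\omega_1$-tree $T$ rather than along the possibly uncountable levels of $S$. Concretely, a condition would be a pair $p = (f_p, H_p)$ where $u_p := \dom(f_p)$ is a countable $<_S$-downward-closed subset of $S$, the function $f_p : u_p \to \omega$ is specializing, and $H_p$ assigns to each $t \in u_p$ an infinite reservoir $H_p(t) \in [\omega]^{\omega}$ subject to $f_p(t) \notin H_p(t)$, and to $H_p(t') \subseteq H_p(t)$ together with $f_p(t') \in H_p(t)$ whenever $t <_S t'$ lie in $u_p$. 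The ordering is end-extension of $f$ together with shrinking of the reservoirs and the demand that colors of new nodes be drawn from the reservoirs of their $<_S$-predecessors. This reservoir discipline makes the specializing condition automatic (if $t <_S t'$ then $f_p(t') \in H_p(t)$ while $f_p(t) \notin H_p(t)$, so $f_p(t') \neq f_p(t)$, and similarly down the chain), and it guarantees that infinitely many colors remain free along every chain already decided by $p$, so a new node can always be colored.

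Granting this, specialization of $S$ is routine. For each $s \in S$ the set of conditions $p$ with $s \in u_p$ is dense: since $s$ has countably many $T$-predecessors it has countably many $<_S$-predecessors, so adjoining them keeps $u_p$ countable, and a color can be chosen from the relevant reservoir, which is infinite. The generic $F = \bigcup_{p \in G} f_p$ is therefore total and specializing, the latter because any $<_S$-chain is a $T$-chain, hence meets each (countable) level of $T$ at most once, and $S$, being wide Aronszajn, has no cofinal branch.

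For ${<}\omega_1$-properness I would fix $\alpha < \omega_1$, a continuous $\in$-increasing chain $\langle N_i : i \le \alpha \rangle$ of countable elementary submodels of some $H(\theta)$ with $S, T, \mathbb{P} \in N_0$, and a condition $p \in N_0$, and then build by a fusion along $i \le \alpha$ a decreasing sequence meeting every dense set in each $N_i$ and converging to a single $q \le p$ that is $(N_i, \mathbb{P})$-generic for all $i \le \alpha$ simultaneously. The crucial point is that at stage $i$ only nodes of $T$-height below $\delta_i := N_i \cap \omega_1$ are frozen into the domain, and $T \restriction \delta_i$ is countable; hence $u_q = \bigcup_n u_{p_n}$ is countable, the union function is specializing, and the coherently shrunk reservoirs remain infinite. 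This is exactly the step where the countable-levels hypothesis on $T$ substitutes for the countable-levels hypothesis that Shelah imposed on the tree being specialized.

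The main work, and the step I expect to be the principal obstacle, is $\mathcal{D}$-completeness. I would define a simple, $\aleph_1$-complete (indeed $2$-)completeness system $\mathcal{D}$ by generating $\mathcal{D}(N, p, \dots)$ from finite coloring constraints read off $T \restriction \delta$, where $\delta = N \cap \omega_1$, each constraint recording which colors must stay reserved along a given chain. Completeness demands that any $\mathcal{D}(N,p)$-generic decreasing $\omega$-sequence $\langle p_n \rangle$ of $(N,\mathbb{P})$-generic conditions have a lower bound, formed by setting $f = \bigcup_n f_{p_n}$ and $H(t) = \bigcap_n H_{p_n}(t)$; one must check that $f$ is still specializing and that each $H(t)$ is still infinite. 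Both can fail for a genuinely wide tree, since at level $\delta$ uncountably many nodes may wish to commit and the reservoirs along a single cofinal-in-$\delta$ chain may shrink to a finite set. The resolution is that $T \restriction \delta$ is countable, so only countably many nodes are relevant, and $\mathcal{D}$ is set up precisely to force the $p_n$ to agree on a coherent choice of reservoirs along each such chain, keeping $\bigcap_n H_{p_n}(t)$ infinite. Encoding this agreement as a genuine $2$-completeness system and verifying it is $\aleph_1$-complete is the technical heart; I expect the delicate point to be that the finitely many constraints in any generator of $\mathcal{D}(N,p)$ can always be met at once, which, via the countable levels of $T$, reduces to Shelah's computation in the narrow case.
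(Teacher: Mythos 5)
There is a genuine gap, and it sits exactly where you predict it will: the $\mathcal D$-completeness argument. Two problems. First, your reservoir discipline as stated does not even deliver the extendability you claim for a \emph{single} condition: along an infinite chain $C \subseteq u_p$ the reservoirs $H_p(t)$, $t \in C$, form a decreasing sequence of infinite sets whose intersection can be finite or empty, so a node of $S$ sitting above $C$ (at a limit level of $T$) may have no admissible color. You would have to add, as part of being a condition, that $\bigcap_{t \in C} H_p(t)$ is infinite for every chain $C$ in $u_p$ that is extendible in $S$, and then the real question becomes whether that property survives meeting dense sets. Second, and more fundamentally, you have no counterpart to the paper's key technical lemma (Lemma \ref{submodel}): for a countable $M \prec H_\theta$ with $\delta = M \cap \omega_1$, given any dense open $D \in M$ and any finite constraint at level $\delta$ of $T$ compatible with $p$ (in your language: a finite demand that certain colors remain in the reservoirs along certain chains heading into $T_\delta$), one can enter $D \cap M$ \emph{while preserving that constraint}. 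Without such a lemma the fusion has no reason to converge: for all you have shown, some dense set in $M$ forces every extension of $p$ to exhaust the reservoir along one of the relevant chains, and then $\bigcap_n H_{p_n}(t)$ is finite and no lower bound exists. Saying that $\mathcal D$ ``is set up precisely to force the $p_n$ to agree'' presupposes that the sets in $\mathcal D(N,\mathbb P,p)$ are nonempty and that their defining constraints are compatible with every dense set, which is precisely the content to be proved.

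This step does not reduce to Shelah's narrow-case computation, because the set of nodes of $S$ lying above a single node of $T_\delta$ can be uncountable, so a single ``slot'' at level $\delta$ of $T$ may be contested by uncountably many potential members of $S$. The paper handles this with machinery you have no analogue of: conditions carry \emph{promises} --- countable families of infinite sets of finite bounding functions at each level --- whose whole purpose is to make the statement ``extensions bounded by $h$ are dense below $p$'' forceable (Lemma \ref{basis}), and the proof of Lemma \ref{submodel} then proceeds by contradiction through a combinatorial argument: one isolates the ``bad'' finite tuples of $S$ witnessing failure, shows (via an ultrafilter argument, using that $S$ has no cofinal branch) that every stabilized bad tuple has two disjoint bad extensions, extracts an infinite pairwise-disjoint family of bad tuples at one level, and turns it into a promise that $p$ already keeps --- contradicting badness. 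This is the only place the Aronszajn-ness of $S$ is used, and it is indispensable. Your identification of the role of $T$ (only the countably many nodes of $T_\delta$ matter at stage $\delta$, which is why the wide levels of $S$ do no harm to the fusion) is correct and matches the paper; but the proposal stops short of the argument that actually protects the level-$\delta$ constraints against dense sets, and that argument is the theorem.
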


Using this poset we give an application to forcing axioms compatible with $\CH$.
\begin{theorem}
Under the forcing axiom for dee-complete and ${<}\omega_1$-proper forcing notions, all $\omega_1$-trees are essentially special and therefore there are no Kurepa trees.
\end{theorem}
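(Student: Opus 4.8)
The plan is to apply the forcing axiom for this class, which I denote $\DCFA$ (for every dee-complete, ${<}\omega_1$-proper poset and every family of $\aleph_1$ dense sets there is a filter meeting them all), to the posets produced by Theorem~\ref{specwidetree}. The crucial bookkeeping point is a cardinality count: if $T$ is an $\omega_1$-tree then $|T|=\aleph_1$, so any suborder $S\subseteq T$ has size at most $\aleph_1$, and to make a generic specializing function total it suffices to meet the $\aleph_1$-many dense sets $D_s=\{p\in\mathbb P_{S,T} : s\in\dom(p)\}$ for $s\in S$, each of which is dense since a finite specializing condition can always be extended to cover one more node. Thus $\DCFA$ applies to $\mathbb P_{S,T}$ with exactly this family and returns, already in the ground model, a total specializing function $f\colon S\to\omega$. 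In short, under $\DCFA$ every wide Aronszajn subtree of every $\omega_1$-tree is special. Taking $S$ to be the \emph{Aronszajn part} $T\setminus U$, where $U$ is the union of all cofinal branches of $T$ (one checks $S$ has no cofinal branch, since such a branch would be cofinal in $T$ and hence lie in $U$), this already shows that every $\omega_1$-tree is essentially special, that is, becomes special once the nodes lying on cofinal branches are deleted.

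For the non-existence of Kurepa trees the previous decomposition is, by itself, not enough: for a \emph{normal} Kurepa tree every node lies on a cofinal branch, so $U=T$, the Aronszajn part is empty and trivially special, yet the tree has $\aleph_2$ branches. The width in Theorem~\ref{specwidetree} must therefore be exploited through an auxiliary construction. Suppose toward a contradiction that $T$ is a Kurepa tree and fix distinct cofinal branches $\langle b_\xi : \xi<\omega_2\rangle$. For $\xi\neq\eta$ let $m_{\xi\eta}=b_\xi\wedge b_\eta$ be their meet, the last common node; this is a symmetric map from $[\omega_2]^2$ into the node set of $T$. The idea is to specialize the tree of these meet nodes: I would build from $T$ an $\omega_1$-tree $T^{+}$ carrying a genuinely branch-free suborder $S^{+}$ whose nodes are (essentially) the $m_{\xi\eta}$, apply Theorem~\ref{specwidetree} and $\DCFA$ to obtain a specialization $g\colon S^{+}\to\omega$, and then colour $[\omega_2]^2$ by $c(\{\xi,\eta\})=g(m_{\xi\eta})\in\omega$.

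Since $\DCFA$ is to be run in a model of $\CH$ (the posets involved add no reals), Erd\H{o}s--Rado gives $\aleph_2\to(\aleph_1)^2_{\aleph_0}$, hence an $H\subseteq\omega_2$ with $|H|=\aleph_1$ on which $c$ is constantly some $n^{\ast}$. Now for any three indices in $H$ the tree ``tripod'' identity says that two of the three pairwise meets coincide (in a node $m_0$) and the third, $m_1$, lies above them; as $m_0$ and $m_1$ are comparable and both coloured $n^{\ast}$, the fact that $g$ is a specialization forces $m_0=m_1$, so \emph{all} pairwise meets among $H$ collapse to a single node $m$. But then the $\aleph_1$ branches $\{b_\xi : \xi\in H\}$ pairwise split exactly at $m$, so $\xi\mapsto$ (the immediate successor of $m$ on $b_\xi$) is injective on $H$, forcing $m$ to have $\aleph_1$ distinct successors and contradicting the countability of the levels of $T$. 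The main obstacle, and the step that genuinely requires the technology of Theorem~\ref{specwidetree} rather than a mere covering argument, is the middle one: realizing the meet nodes as a wide Aronszajn subtree $S^{+}$ of an $\omega_1$-tree with no cofinal branch (a normal Kurepa tree has no nonempty branch-free suborder capturing all of its splittings, so $S^{+}$ must be extracted after a careful thinning of the $b_\xi$, or inside a suitably modified ambient tree $T^{+}$) while retaining enough of the meets $m_{\xi\eta}$ coloured for the Erd\H{o}s--Rado step to bite. I expect this construction, together with the verification that the resulting $\mathbb P_{S^{+},T^{+}}$ remains dee-complete and ${<}\omega_1$-proper, to be the technical heart of the argument.
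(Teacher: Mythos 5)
Your first step is fine as far as it goes: applying $\DCFA$ to $\mathbb P_{S,T}$ with the $\aleph_1$ dense sets $D_s$ does specialize, in the ground model, any wide Aronszajn suborder $S$ of an $\omega_1$-tree $T$. But the choice $S=T\setminus U$ does not prove the theorem, and you have correctly diagnosed why: for a normal Kurepa tree every node lies on a cofinal branch, so $T\setminus U=\emptyset$ and you learn nothing. Moreover, ``special off the branches'' is not the notion of essentially special used here (a map $f:T\to\mathbb Q$ weakly increasing on chains such that $s\leq_T t,u$ and $f(s)=f(t)=f(u)$ force $t,u$ to be comparable), and it is only that notion which bounds the number of cofinal branches by $\aleph_1$. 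The genuine gap is your replacement argument for the Kurepa case: it is an unfinished sketch whose central object, the branch-free tree $S^{+}$ of meet nodes inside an auxiliary $\omega_1$-tree $T^{+}$, is never constructed (and, as you yourself note, cannot simply be a suborder of $T$, since in a Kurepa tree the splitting nodes can carry cofinal branches). In addition, the Erd\H{o}s--Rado step $\aleph_2\to(\aleph_1)^2_{\aleph_0}$ needs $\CH$, which $\DCFA$ does not imply ($\PFA$ implies $\DCFA$); the whole point of the theorem is to remove such cardinal arithmetic hypotheses.

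The missing idea is a preliminary collapse. Let $\lambda$ be the number of cofinal branches of $T$ and first force with the $\sigma$-closed collapse $Col(\lambda,\aleph_1)$; $\sigma$-closed forcing is dee-complete and ${<}\omega_1$-proper and adds no uncountable branches to $T$, so in that extension $T$ has at most $\aleph_1$ cofinal branches. There Baumgartner's Lemma 7.7 of \cite{BaumPFA} (which requires the branch count to be at most $\aleph_1$, hence the collapse) produces an uncountable suborder $S\subseteq T$ with no uncountable branch such that any specialization of $S$ yields an essential specialization of $T$; one then forces with $\mathbb P_{T,S}$ from Theorem \ref{specwidetree}. The two-step iteration is again dee-complete and ${<}\omega_1$-proper, so $\DCFA$ applies to it with the $\aleph_1$ dense sets deciding the value of $f$ at each node of $T$, and returns an $f:T\to\mathbb Q$ in $V$ witnessing that $T$ is essentially special; the lemma that essentially special trees have at most $\aleph_1$ cofinal branches then rules out Kurepa trees. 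This is exactly where the width of $S$ (up to $\aleph_1$, but embedded in the $\omega_1$-tree $T$), and hence Theorem \ref{specwidetree}, is needed.
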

This latter theorem was shown by Shelah under the additional assumption that $\CH$ and $2^{\aleph_1} = \aleph_2$ holds. What's new here is that using the forcing notion from Theorem \ref{specwidetree} we can remove the cardinal arithmetic assumption. 

The general question of when one can specialize a wide tree without adding reals turns out to be very interesting and there are many open questions still. The note finishes with some brief further observations and open problems. In particular, a connection to cardinal characteristics is observed.

\section{Preliminaries: Dee-complete Forcing, ${<}\omega_1$-Properness and Trees}

\subsection{Strengthening Properness}

 Given a model $N$ which elementarily embeds in some $H_\theta$, a forcing notion $\mathbb P \in N$ and a condition $p \in \mathbb P$ write ${\rm Gen}(N, \mathbb P, p)$ for the set of $\mathbb P$-generic filters over $N$ containing $p$. The following definitions come from \cite[Chapter V]{PIPShelah} and a particularly good exposition is also given in \cite{AbrahamHB}. What I call a completeness system here is called a \say{countably complete} completeness system in \cite{PIPShelah}. However, every completeness system considered in this note is countably complete so I omit the additional notation.

\begin{definition}
A {\em completeness system} is a function $\mathbb D$ defined on some set of triples $(N, \mathbb P, p)$ such that $N \prec H_\theta$ for some $\theta$, $\mathbb P \in N$ is a forcing notion and $p \in \mathbb P \cap N$ is a condition and the following hold:
\begin{enumerate}
\item
$\mathbb D(N, \mathbb P, p)$ is a family of sets, $A$, such that each $A \subseteq {\rm Gen}(N, \mathbb P, p)$.
\item
If $A_i \in \mathbb D(N, \mathbb P, p)$ for each $i < \omega$ then the intersection $\bigcap_{i < \omega} A_i$ is non-empty.
\end{enumerate}
If for a fixed $\mathbb P$ and some cardinal $\theta$, if $\mathbb D$ is defined on the set of all triples $(N, \mathbb P, p)$ with $p \in \mathbb P \in N$, $p \in N$ and $N \prec H_\theta$ then we call $\mathbb D$ a completeness system on $\theta$ for $\P$.
\end{definition}
Completeness systems in general are quite easy to construct, which leads one to question their utility. In general we will only be interested therefore in ones which are \say{nicely defined}, a notion Shelah refers to as {\em simple}.
\begin{definition}
A completeness system $\mathbb D$ is {\em simple} if there is a formula $\phi$ and a parameter $s \in H_{\omega_1}$ such that $\mathbb D(N, \mathbb P, p) = \{A^{N, \P, p}_u \; | \; u \in H_{\omega_1}\}$ where $A^{N, \P, p}_u$ is defined as follows: for $N\prec H_\theta$, let $\bar{N}$ be the Mostowski collapse of $N$ and $\pi_N:\bar{N} \to N$ the inverse of the Mostowski collapse. We let $\overline{A}^{N, \mathbb P, p}_u := \{\bG \in {\rm Gen}(\bar{N}, \pi^{-1}(\P), \pi_N^{-1}(p)) \; | \; H_{\omega_1} \models \phi (\bar{N}, \bG, \pi^{-1}(\P), \pi_N^{-1}(p), u, s)\}$. Finally let $A^{N, \P, p}_u$ be the set of generics generated by $\pi_N``\bG$ for $\bG \in \overline{A}^{N, \mathbb P, p}_u$.
\end{definition}
Using this, I can define dee-completeness.
\begin{definition}
We say that $\mathbb P$ is {\em dee-complete} if for every sufficiently large $\theta$ there is a simple completeness system $\mathbb D$ on $\theta$ for $\P$ such that whenever $\mathbb P \in N \prec H_\theta$, with $N$ countable and $p \in \mathbb P \cap N$ there is an $A \in \mathbb D(N, \mathbb P, p)$ such that for all $\overline{G} \in A$ there is a condition $q \in \mathbb P$ so that $q \leq r$ for all $r \in \overline{G}$.
\end{definition}
Given a poset $\mathbb P$ we say that a (not necessarily simple) completeness system $\mathbb D$ is a {\em completeness system for} $\mathbb P$ if it satisfies the requirements of the definition of dee-completeness. Observe that the existence of a completeness system for $\mathbb P$ implies that $\mathbb P$ is proper and adds no new reals (or indeed $\omega$ sequences of elements from $V$) since the condition $q$ as in the definition of dee-completeness is an $(N, \mathbb P)$-generic condition and if $\dot{a}: \check{\omega} \to \check{V}$ names an $\omega$-sequence, then there is a model $N \ni \dot{a}$ and a $\P$-generic $G$ over $N$ which has a lower bound $q$ so $q$ decides $\dot{a}(\check{n})$ for all $n < \omega$.

\begin{definition}[$\alpha$-Properness]
Let $\theta$ be a cardinal and $\alpha < \omega_1$. An $\alpha$-{\em tower} for $H_\theta$ is a sequence $\vec{N} = \langle N_i \; | \; i < \alpha \rangle$ of countable elementary substructures of $H_\theta$ so that for each $\beta < \alpha$, we have $\langle N_i \; | \; i \leq \beta \rangle \in N_{\beta + 1}$ and if $\lambda < \alpha$ is a limit ordinal then $N_\lambda = \bigcup_{i < \lambda} N_i$. We say that $\mathbb P$ is $\alpha$-proper if for all sufficiently large $\theta$, all $p \in \mathbb P$ and all $\alpha$-towers $\vec{N}$ in $H_\theta$ so that $p, \mathbb P \in N_0$ there is a $q \leq p$ which is simultaneously $(N_i, \mathbb P)$-generic for every $i < \alpha$. We say that $\mathbb P$ is ${<}\omega_1$-proper if it is $\alpha$-proper for all $\alpha < \omega_1$.
\end{definition}
Note that properness is $1$-properness. The point is the following iteration theorem due to Shelah, \cite[Chapter V, Theorem 7.1]{PIPShelah}.
\begin{theorem}
If $\langle (\mathbb P_\alpha, \dot{\mathbb Q}_\alpha) \; | \; \alpha < \nu\rangle$ is a countable support iteration of some length $\nu$ so that for each $\alpha < \nu$, $\forces_{\mathbb P_\alpha} ``\dot{\mathbb Q}_\alpha$ is dee-complete and ${<}\omega_1$-proper", then $\mathbb P_\nu$ is dee-complete and ${<}\omega_1$-proper. In particular such iterations do not add reals.
\end{theorem}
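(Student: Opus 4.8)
The plan is to proceed by induction on the length $\nu$, proving at each stage that $\mathbb P_\nu$ enjoys \emph{both} ${<}\omega_1$-properness and dee-completeness; these must be carried together, since dee-completeness alone is not preserved by countable support iterations and it is precisely the extra properness that repairs this. The base case is trivial and the successor case $\nu=\mu+1$ reduces to the two-step iteration $\mathbb P_\mu * \dot{\mathbb Q}_\mu$: given an $\alpha$-tower $\langle N_i : i<\alpha\rangle$, one first uses ${<}\omega_1$-properness of $\mathbb P_\mu$ to obtain a condition generic for the tower and then, passing to the extension, uses that $\dot{\mathbb Q}_\mu$ is forced ${<}\omega_1$-proper; dee-completeness of the two-step iteration is obtained by assembling the completeness system for the product out of the completeness systems of the two factors, so that a witnessing lower bound for $\mathbb P_\mu$ sits below one to which $\dot{\mathbb Q}_\mu$'s system can be applied. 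Thus the whole difficulty is concentrated in the limit case, and especially in limits of cofinality $\omega$.

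For the preservation of ${<}\omega_1$-properness at a limit $\nu$, I would isolate and prove the $\alpha$-properness iteration theorem by a simultaneous induction on $\alpha$ and on $\nu$. Given an $\alpha$-tower $\vec N=\langle N_i : i<\alpha\rangle$ with $p,\mathbb P_\nu\in N_0$, the generic condition is built by a recursion that interleaves the iteration coordinates with the models of the tower: one constructs a $\leq$-decreasing sequence of conditions, each lying in the next model and each generic for a longer initial segment of the iteration, arranging the supports to remain inside $\bigcup_{i<\alpha} N_i$ so that the amalgamated condition is simultaneously $(N_i,\mathbb P_\nu)$-generic for every $i$. The reason full ${<}\omega_1$-properness rather than plain properness is required is visible at limit points $\lambda<\alpha$ of the tower, where the union model $N_\lambda=\bigcup_{i<\lambda}N_i$ must itself be serviced; handling it appeals to $\beta$-properness for the tower-segment $\langle N_i : i<\lambda\rangle$, so the induction genuinely consumes all the smaller instances of $\alpha$-properness.

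Preservation of dee-completeness at a limit $\nu$ with $\cf(\nu)=\omega$ is the step I expect to be the main obstacle. Fixing a cofinal sequence $\nu_n\nearrow\nu$, I would define a completeness system $\mathbb D$ on $\mathbb P_\nu$ whose sets $A_u$ are specified in the Mostowski collapse by a formula asserting that a candidate generic filter $\bar G$ restricts at each coordinate $\nu_n$ to a filter belonging to the corresponding set furnished by the completeness system for $\mathbb P_{\nu_n}$ (available by the inductive hypothesis), with the parameter $u$ coding the thread of parameters used at the successive coordinates. The countable completeness of $\mathbb D$ --- that $\bigcap_{i<\omega}A_i\neq\emptyset$ --- is then inherited from the countable completeness of the iterands' systems by a diagonal argument across the coordinates. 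The genuine work is to produce, for a countable $N\prec H_\theta$ and $p\in\mathbb P_\nu\cap N$, a member $A\in\mathbb D(N,\mathbb P_\nu,p)$ \emph{all} of whose filters $\bar G$ admit an actual lower bound $q\in\mathbb P_\nu$. Here dee-completeness alone cannot bridge the limit, and ${<}\omega_1$-properness becomes indispensable: to build $q$ one runs an $\omega$-step construction climbing the coordinates $\nu_n$, and at each step a generic condition must be threaded through a limit model of an auxiliary tower, which is exactly what $\alpha$-properness (for the relevant $\alpha<\omega_1$) supplies; the countable completeness of $\mathbb D$ then certifies that the resulting $\omega$-chain of partial conditions closes off to a bona fide generic filter over $N$, whose lower bound is the required $q$.

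Finally, the ``in particular'' clause is immediate once a completeness system for $\mathbb P_\nu$ is in hand, exactly as recorded after the definition of dee-completeness: any name $\dot a$ for an $\omega$-sequence of elements of $V$ lies in some countable $N\prec H_\theta$, and a lower bound $q$ of a generic filter drawn from the witnessing set $A$ is $(N,\mathbb P_\nu)$-generic and hence decides $\dot a(\check n)$ for every $n<\omega$, so $\dot a$ is forced to name a ground-model sequence.
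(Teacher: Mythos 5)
The paper does not prove this theorem at all: it is quoted verbatim from Shelah (\cite[Chapter V, Theorem 7.1]{PIPShelah}) and used as a black box, so there is no in-paper argument to compare yours against. Measured against Shelah's actual proof, your proposal correctly identifies the architecture --- simultaneous induction carrying both properties, reduction of the successor case to a two-step iteration, preservation of $\alpha$-properness by an interleaved recursion over the tower and the coordinates, and the concentration of all difficulty at limits of cofinality $\omega$ --- but it is a roadmap rather than a proof, and the two places where it waves its hands are exactly where the theorem lives.

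First, you never engage with \emph{simplicity}. The completeness system for $\dot{\mathbb Q}_\alpha$ exists only in the extension by $\mathbb P_\alpha$, so to ``assemble'' a ground-model completeness system for $\mathbb P_\nu$ (even for a two-step iteration) you must be able to describe the iterands' systems in $V$; this is precisely what the requirement that each system be \emph{simple} --- definable over $H_{\omega_1}$ with a parameter in $H_{\omega_1}$ --- is for, together with the inductively known fact that earlier stages add no new $\omega$-sequences and hence do not change $H_{\omega_1}$. Without this your ``thread of parameters $u$'' has no ground-model meaning. Second, the step you label ``the genuine work'' --- producing, for a countable $N$ and $p\in\mathbb P_\nu\cap N$, a set $A\in\mathbb D(N,\mathbb P_\nu,p)$ \emph{every} member of which has a lower bound --- is the entire content of the theorem, and ``one runs an $\omega$-step construction climbing the coordinates $\nu_n$, threading through limit models of an auxiliary tower'' does not do it: one must specify how the tower is chosen relative to $N$, how at stage $n$ the completeness system for the tail $\mathbb P_{\nu_{n+1}}/\mathbb P_{\nu_n}$ is invoked inside a forcing extension via names while keeping the accumulating condition's support countable and the already-made commitments intact, and why the resulting $\omega$-limit is again a condition with the required properties. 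This fusion occupies the bulk of Shelah's Chapter V (and of Abraham's exposition in \cite{AbrahamHB}), and asserting its existence is a genuine gap, not a compression.
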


As an immediate consequence, we obtain, relative to a supercompact, the consistency of $\DCFA$, the forcing axiom for dee-complete and ${<}\omega_1$-proper forcing notions and even its consistency with $\CH$. Of course $\DCFA$ does not imply $\CH$ since $\PFA$ implies $\DCFA$ trivially. Very little attention has gone into $\DCFA$ as an axiom in its own right outside of \cite{JensenCH}. However one notable exception is \cite{AbrahamTodPP} where it is shown that $\DCFA + \CH$ implies the P-Ideal Dichotomy.

\subsection{Trees}
The main purpose of this note is to look at applications of dee-complete forcing to trees. Let me review some notation and terminology related to this here for reference. Recall that a {\em tree} $T = \langle T, \leq_T\rangle$ is a partially ordered set so that for each $t \in T$ the set of $s \leq_T t$ is well ordered. A {\em branch} through a tree is a maximal linearly ordered subset.

\begin{definition}
Let $T$ be a tree, $\alpha$ an ordinal and $\kappa$ and $\lambda$ cardinals.
\begin{enumerate}
\item
The $\alpha^{\rm th}$-{\rm level} of $T$, denoted $T_\alpha$ is the set of all $t \in T$ so that $\{s \; | \; s <_T t\}$ has order type $\alpha$. Also let $T_{\leq \alpha} = \bigcup_{i \leq \alpha} T_i$ and $T_{< \alpha} = \bigcup_{i < \alpha} T_i$.
\item
The {\em height} of $T$ is the least $\alpha$ with $T_\alpha = \emptyset$. 
\item
If $\alpha < \beta$ are ordinals, $T$ is a tree of height at least $\beta + 1$ and $t \in T_\beta$ then denote by $t \hook \alpha$ the unique $s \in T_\alpha$ so that $s \leq_T t$.
\item
We say that $T$ is a $\kappa$-tree if it has height $\kappa$ and each level has size $<\kappa$.
\item
$T$ is a $\kappa$-{\em Aronszajn} tree if it is a $\kappa$-tree with no branch of size $\kappa$. If $\kappa = \aleph_1$ we just say Aronszajn tree.
\item
$T$ is a $(\kappa, {\leq}\lambda)$-Aronszajn tree if it is a tree of height $\kappa$ with each level of size ${\leq}\lambda$ and no branch of size $\kappa$. An $(\aleph_1, {\leq}\lambda)$-Aronszajn tree is called a {\em wide Aronszajn tree} if $\lambda$ is uncountable and the equality is witnessed at some level i.e. it is not a $(\omega_1, {<}\omega_1)$-tree\footnote{The use of the word ``wide" appears to come from the recent (and fascinating) paper \cite{DzSh2020}, though the concept has been in the literature for over 50 years. }. 
\item
A (wide) Aronszajn tree is {\em special} if it can be decomposed into countably many antichains. Equivalently if there is a {\em specializing function} $f:T \to \mathbb Q^+$, the set of positive rationals so that $f$ is strictly increasing on linearly ordered subsets of $T$.
\item
A tree $T$ of height $\omega_1$ (potentially with branches) is {\em essentially special} if there is a function $f:T \to \mathbb Q$ which is weakly increasing on chains and so that for all $s \leq_T t, u$ if $f(t) = f(s) = f(u)$ then $t$ and $u$ are comparable. 
\item
An $\omega_1$-tree is Kurepa if it has more than $\aleph_1$ many uncountable branches. It's a {\em weak} Kurepa tree if it is a tree of height and cardinality $\aleph_1$ with more than $\aleph_1$ many branches.
\end{enumerate}
\end{definition}
Throughout this note I will only be considering {\em normal} trees: that is a tree $T$ so that $|T_0| = 1$, every node is comparable with nodes on every level, and for each $s, t \in T$ of limit height $\alpha$, if $s\neq t$ there is a $\beta < \alpha$ so that $s \hook \beta \neq t \hook \beta$. Unless otherwise specified, in what follows ``tree" means ``normal tree".

Special trees were first investigated in connection with forcing in \cite{BMR70} where it was shown that the poset to add a specializing function with finite approximations is ccc and hence $\MA + \neg \CH$ implies that all trees of height $\aleph_1$, cardinality less than $2^{\aleph_0}$ and no uncountable branch are special. This poset obviously adds reals. Specializing without adding reals is more delicate as we will see.

\section{Specializing a Wide Tree}
In this section I work towards proving Theorem \ref{specwidetree}. The forcing notion used is very similar to the poset from \cite[Section 4]{AbrahamShelah93} which specializes a thin tree without adding reals. This is due to Abraham and Shelah, building on the original example of such a poset from \cite[Chapter V, Theorem 6.1]{PIPShelah}. Throughout, fix an $\omega_1$-tree $T$ (possibly with uncountable branches) and let $S \subseteq T$ be an $(\omega_1, {\leq}\omega_1)$-Aronszajn tree with the induced suborder. Without loss we may assume that $T \subseteq H_{\omega_1}$. The first step is to define the forcing $\mathbb P$. The idea is to force with partial specializing functions $f:S \to \mathbb Q$ but use the structure of $T$ to control the forcing. 

I begin by defining the objects that will build up the conditions. Throughout there is a subtlety concerning partial functions from $T$ to $\mathbb Q$ that I want to address up front. Fix ordinals $\alpha < \beta < \omega_1$. Often times I will be considering some function $h$ which maps some subset of $T_\beta$ to $\mathbb Q$ and we would like to consider the projection of this function to level $\alpha$ i.e. a new function $\hat{h}$ so that for each $t \in {\rm dom}(h)$ $\hat{h}(t \hook \alpha) = h(t)$. The issue is that $\hat{h}$ as written may not be a function since several different $t$'s on level $\beta$ may have the same projection to level $\alpha$. To avoid this I will use the following convention: let $\hat{h}(t \hook \alpha) = {\rm min} \{ h(s) \; | \; s \in {\rm dom}(h) \; {\rm and} \; s \hook \alpha = t \hook \alpha\}$ if this number exists and leave $\hat{h} (t)$ undefined otherwise. Note that if the domain of $h$ is finite the projections are always defined. When it will cause no confusion I will write $h \hook \alpha$ to denote the projection of $h$ to $\alpha$ and omit the hat.

\begin{definition}
Recall that $S \subseteq T$ are trees, $T$ is thin, potentially with cofinal branches and $S$ is wide, without cofinal branches. Throughout, unless otherwise noted, for a node $t \in S$, I mean by $t \hook \alpha$ the projection of $t$ to level $\alpha$ {\em in the sense of} $T$ (as opposed to $S$).

\begin{enumerate}
\item
A {\em partial specializing function} of height $\alpha$  is a function $f: T_{\leq \alpha} \cap S \to \mathbb Q$ which is strictly increasing on linearly ordered chains. We write $ht(f) = \alpha$ to denote the height of $f$.
\item
A (possibly partial) function $h:T_\beta \to \mathbb Q$ {\em projects into} $S$ if for each $t \in {\rm dom} (h)$ there is an $\alpha \leq \beta$ so that $t\hook \alpha \in S$. We say that such an $h$ {\em bounds} a partial specializing function if $\beta \geq \alpha + 1$ and for all $t$ in the domain of $h$ whose projection to the $\alpha + 1^{\rm st}$ level is in $S$ we have that $h(t) > f(t \hook \alpha + 1)$. 
\item
A {\em requirement} $H$ of height $\beta$ and arity $n = n(H) \in \omega$ is a countably infinite family of finite functions $h:T_\beta \to \mathbb Q$ which project into $S$ and whose domains have size $n$.
\item
A partial specializing function $f$ {\em fulfills} a requirement $H$ if the height of $f$ is at most the height of $H$ and for every finite $\tau \subseteq T_\beta$,  $\beta$ the height of $H$, there is an $h \in H$ bounding $f$ whose domain is disjoint from $\tau$.
\item
A {\em promise} is a function $\Gamma$ defined on a tail set of countable ordinals, the first of which we denote $\beta = \beta (\Gamma)$ so that for each $\gamma \geq \beta$, $\Gamma (\gamma)$ is a countable set of requirements of height $\gamma$ and if $\gamma ' \geq \gamma$ then $\Gamma (\gamma) = \Gamma (\gamma ') \hook \gamma$  i.e. every $H \in \Gamma(\gamma)$ there is some $H' \in \Gamma (\gamma ')$ so that $H ' = \{h \hook \gamma ' \; | \; h \in H\}$. Note that since each $h$ is finite, each projection $h \hook \gamma '$ is defined, however several distinct $h$'s may have the same projection.
\item
A partial specializing function $f$ {\em keeps} a promise $\Gamma$ if $\beta (\Gamma) \geq ht(f)$ and $f$ fulfills every $H \in \Gamma (\gamma)$ for all $\gamma \geq \beta$. Note that by the projection property given in the definition of a promise, to keep a promise it suffices to fulfill the requirements at the first level.
\item
The forcing notion $\mathbb P = \mathbb P_{T, S}$ consists of pairs $p = (f_p, \Gamma_p)$ where $f_p$ is a partial specializing function, $\Gamma_p$ is a promise and $f_p$ keeps $\Gamma$. We write $ht(p)$ for $ht(f_p)$ and $\beta(p)$ for $\beta(\Gamma_p)$. Finally we let $p \leq q$ if $f_p \supseteq f_q$, $\beta(p) > \beta(q)$ and for all $\gamma \geq \beta(p)$,  $\Gamma_p (\gamma) \supseteq \Gamma_q(\gamma)$. 
\end{enumerate}
\end{definition}

The proof of Theorem \ref{specwidetree} is broken up into a number of lemmas which collectively show that $\mathbb P$ has the properties advertized in the theorem. First let's show that any condition can be extended arbitrarily high up the tree. Note that this will imply that $\mathbb P$ specializes $S$.
\begin{lemma}
Suppose $p \in \mathbb P$ of height $\alpha$ and let $\beta \geq \alpha$. Then there is a $q \leq p$ of height $\beta$. Moreover, if $g: T_\beta \to \mathbb Q$ is a finite function bounding $f_p$ then $q$ can be found so that $g$ bounds $f_q$ as well.
\label{extensionlemma}
\end{lemma}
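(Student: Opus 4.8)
The plan is to build $q=(f_q,\Gamma_q)$ by taking $\Gamma_q$ to be a tail of $\Gamma_p$ and extending $f_p$ carefully upward. I would fix $\delta=\max(\beta,\beta(p))+1$ and set $\Gamma_q:=\Gamma_p\restriction[\delta,\omega_1)$. Since a promise restricted to a smaller tail set of ordinals is again a promise, $\Gamma_q$ is a promise with $\beta(\Gamma_q)=\delta$ and $\Gamma_q(\gamma)=\Gamma_p(\gamma)$ for every $\gamma\geq\delta$; in particular $\delta>\beta(p)$ and $\delta>\beta\geq ht(f_q)$, which is exactly what the ordering on $\mathbb P$ demands of the promise coordinate of a condition below $p$. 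Thus once I produce a specializing function $f_q\supseteq f_p$ of height $\beta$ that keeps $\Gamma_q$, I will have $q\leq p$. By the coherence remark in the definition of a promise, $f_q$ keeps $\Gamma_q$ as soon as it fulfills each requirement in the single level $\Gamma_q(\delta)=\Gamma_p(\delta)$, so this is the only part of the promise I must watch. If $\beta=\alpha$ there is nothing to add and I take $f_q=f_p$, so from now on assume $\beta>\alpha$.

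To define $f_q$ I would recurse on the levels $\xi$ with $\alpha<\xi\leq\beta$, assigning values only to the $S$-nodes of $T_\xi$ and keeping the result strictly increasing along $S$-chains. At each new $S$-node $u$ I want to pick $f_q(u)\in\mathbb Q$ strictly above the supremum of the values already assigned to the $S$-predecessors of $u$; density of $\mathbb Q$ makes this possible exactly when that supremum is finite. At $S$-nodes whose set of $S$-predecessors has a maximum this is automatic, so the only genuine difficulty is at $S$-nodes of limit height, where the predecessor values form a strictly increasing sequence and might a priori be unbounded. I must therefore build the extension so that the values stay bounded along every $S$-chain.

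The key point is that boundedness at limits and preservation of the promise can be handled by the same device: confining the newly assigned values to a narrow band. Because $f_p$ keeps $\Gamma_p$ and $\delta\geq\beta(p)$, for each $H\in\Gamma_p(\delta)$ and each finite $\tau\subseteq T_\delta$ there is some $h\in H$, disjoint from $\tau$, that bounds $f_p$. Crucially, these bounding inequalities hold for $f_p$ with room to spare: each relevant threshold $h(t)$ strictly exceeds, by a definite amount, the top value that $f_p$ assigns on the chain through $t$. Hence, if along each chain I keep all new $f_q$-values inside a sufficiently thin interval sitting just above that top $f_p$-value — thin enough to remain below every such threshold — then every $h$ that bounded $f_p$ still bounds $f_q$. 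Consequently the very witnesses that fulfilled each $H$ for $f_p$ continue to fulfill it for $f_q$, so $f_q$ fulfills every $H\in\Gamma_q(\delta)$ and keeps $\Gamma_q$, with no bookkeeping over the finite sets $\tau$ at all. The same thin-band constraint forces every chain to have bounded values, so the limit-level suprema are finite and the recursion goes through. When $g$ is given (which forces $\beta>\alpha$), its finitely many thresholds are incorporated identically: $g$ bounds $f_p$ with the same slack, so shrinking the band to lie below those finitely many values as well makes $g$ bound $f_q$.

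The step I expect to be the main obstacle is precisely this coordination: the new values must simultaneously be strictly increasing on chains, have finite suprema at limit levels, and lie below all of the (possibly infinitely many) thresholds coming from the requirements of $\Gamma_q(\delta)$ and from $g$. What makes it work is the slack in the bounding relation — every threshold relevant to a given node lies a fixed distance above the corresponding value of $f_p$ — so a single choice of a thin enough band on each chain meets all three demands at once and lets me reuse the witnesses of $f_p$ verbatim rather than searching for new ones. Carrying out the level-by-level recursion under this band constraint yields the desired $f_q$, and then $q=(f_q,\Gamma_q)$ is a condition of height $\beta$ with $q\leq p$, and with $g$ bounding $f_q$ whenever $g$ was given.
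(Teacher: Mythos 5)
The overall architecture you set up --- take a tail of $\Gamma_p$ as the new promise, extend $f_p$ upward level by level, and reduce keeping the promise to fulfilling the requirements at a single level --- matches the paper. But the central step, preserving \emph{every} bounding witness by confining the new values to a ``thin band'' just above $f_p$, does not work, and this is exactly where the content of the lemma lies. The claim that ``each relevant threshold $h(t)$ strictly exceeds, by a definite amount, the top value that $f_p$ assigns on the chain through $t$'' is the gap: each individual threshold exceeds that value by \emph{some} positive amount, but there is no uniform positive gap. A single node $u$ in the new part of the tree can lie below infinitely many nodes $t$ occurring in the countably many finite functions $h$ belonging to the countably many requirements of $\Gamma_q(\delta)$, and nothing in the definition of a requirement prevents the rationals $h(t)$ for those $t$ from accumulating from above on $f_p(u\hook\alpha)$. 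In that situation no value of $f_q(u)$ is simultaneously strictly above $f_p(u\hook\alpha)$ and below all of the thresholds, so no band, however thin, preserves all witnesses. Note also that you cannot retreat to preserving only finitely many $h\in H$ per requirement: if only finitely many members of $H$ bound $f_q$, then $\tau$ equal to the union of their domains defeats fulfillment, so the quantifier over finite $\tau$ genuinely requires infinitely many surviving witnesses whose domains are not covered by any single finite set.

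The paper's proof resolves this by preserving only a carefully chosen infinite subfamily of witnesses per requirement, which reintroduces precisely the bookkeeping over the sets $\tau$ that you hoped to skip. It enumerates all pairs $(H_n,\tau_n)$ with $H_n\in\Gamma_p(\beta)$ and $\tau_n\subseteq T_\beta$ finite, and at stage $n$ uses the fact that $f_p$ fulfills $H_n$ to select a single $h\in H_n$ whose finite domain is disjoint from $\tau_n$ together with everything already committed to at earlier stages; the new values on $\mathrm{dom}(h)\cap S$ are then chosen below the finitely many constraints active at that stage (that one $h$, and $g$). Because only finitely many thresholds are ever live at once, the accumulation problem disappears, and since every pair $(H,\tau)$ eventually appears in the enumeration, every requirement is fulfilled. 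At limit $\beta$ this diagonalization is interleaved with the inductive hypothesis along a cofinal sequence $\beta_n\to\beta$, invoking the ``moreover'' clause of the lemma at each $\beta_{n+1}$ with bounding function $g\cup f_n$. Your handling of the promise coordinate and of $g$ is fine; it is the ``no bookkeeping'' shortcut that must be replaced by this diagonal construction.
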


\begin{proof}
The proof is by induction on $\beta$. There are two cases.

\noindent {\bf\underline{Case I}:} $\beta$ is a successor ordinal. By induction assume that $\beta = \alpha + 1$. Thus there is a function $f_p:T_\alpha \to \mathbb Q$ and we need to find an $\widetilde{f}:T_\beta \to \mathbb Q$ so that $(f_p \cup \widetilde{f} , \Gamma_p \setminus \Gamma_p(\alpha)) \in \mathbb P$. I will define such an $\widetilde{f}$ in countably many stages, noting that there are only countably many things that we need to account for. Indeed I will define finite functions $f_n$ for $n < \omega$ so that $f_n \subseteq f_{n+1}$ and the union will be $\widetilde{f}$. Let $\{ (H_n, \tau_n) \; | \; n < \omega \}$ enumerate all possible pairs of requirements $H \in \Gamma_p(\beta)$ and finite subsets $\tau \subseteq T_\beta$. Also let $T_\beta \cap S = \{t_n \; | \; n < \omega\}$. Let $f_0$ be defined as follows: by the definition of $\mathbb P$, for $\tau = \tau_0 \subseteq T_\beta$ there is an $h \in H_0$ which projects into $S$ and so that $\tau_0$ is disjoint from the domain of $h$ and $h$ bounds $f_p$. For each $t \in {\rm dom} (h) \cap S$, let $f_0 (t)$ be any rational above $f_p(t \hook \alpha)$ less than the value of $h(t)$ and $g(t)$ if the latter is defined. Then, if $t_0 \notin {\rm dom}(h)$, define $f_0 (t_0)$ to be any value less than $g(t_0)$, again assuming this value is defined (again above $f_p$). This completes the construction of $f_0$. Note that its domain is finite. Suppose now that we have defined $f_i$ for all $i \leq n$ so that for all $i < n$ we have that $f_i \subseteq f_{i+1}$, $t_i$ (if it exists, note $T_\beta \cap S$ could be finite) is in the domain of $f_i$ and $f_i$ is bounded by some $h \in H_i$ whose domain is disjoint from $\tau_i$. Moreover assume that $f_i$ is bounded by $g$. Now I define $f_{n+1}$ by performing the same procedure as described for $f_0$, except that $\tau_{n+1}$ is replaced by $\tau_{n+1} \cup {\rm dom}(f_n)$. Note that this set is still finite so we can find a good $h \in H_{n+1}$ by the definition of a requirement. Now let $\widetilde{f} = \bigcup_{n < \omega } f_n$. Clearly this function is defined on all of $T_\beta$ and keeps the promise $\Gamma(\beta)$ so we are done.

\noindent {\bf\underline{Case II}:} $\beta$ is a limit ordinal. Fix a strictly increasing sequence $\langle \beta_n \; | \; n < \omega\rangle$ so that $\beta_0 = \alpha$ and ${\rm sup}_{n} \beta_n = \beta$. The idea is to weave the procedure described in Case I to build a function on $T_\beta \cap S$ with the inductive assumption that allows us to extend $f_p$ to each $\beta_n$. More concretely, as before  let $\{ (H_n, \tau_n) \; | \; n < \omega \}$ enumerate all possible pairs of requirements from $\Gamma_p(\beta)$ and finite subsets $\tau \subseteq T_\beta$. Also let $T_\beta \cap S= \{t_n \; | \; n < \omega\}$. Define $f_0$ as in Case I:  for each $t \in {\rm dom}(h) \cap S$, let $f_0 (t)$ be any rational above $f_p(t \hook \alpha)$ less than the value of $h(t)$ and $g(t)$ if the latter is defined. Then, if $t_0 \notin {\rm dom}(h)$, define $f_0 (t_0)$ to be any value less than $g(t_0)$, again assuming this value is defined (again above $f_p$). Now, using the inductive assumption, let $\hat{f}_1 \supseteq f_p$ be a partial specializing function of height $\beta_1$ bounded by $g \cup f_0$.

Now inductively suppose we have defined $f_i$ for all $i \leq n$ so that for all $i < n$ we have that $f_i \subseteq f_{i+1}$, $t_i$ is in the domain of $f_i$ (if $t_i$ exists) and $f_i$ is bounded by some $h \in H_i$ whose domain is disjoint from $\tau_i$. Moreover assume that $f_i$ is bounded by $g$. Also suppose that we have defined $\hat{f}_i: T_{\beta_i} \cap S \to \mathbb Q$ for all $i \leq n$ so that for all $i < n$ $\hat{f}_{i+1} \supseteq \hat{f}_i$ and $\hat{f}_n$ is bounded by $g \cup f_n$. Now define $f_{n+1}$ by performing the same procedure as described for $f_0$, except that $\tau_{n+1}$ is replaced by $\tau_{n+1} \cup {\rm dom}(f_n)$ and $f_p$ is replaced with $\hat{f}_n$. Then we define $f_{n+1}$ exactly as in Case I, using $\hat{f}_n$ in placed of $f_p$ and then extend $\hat{f}_n$ to a function $\hat{f}_{n+1} : T_{\beta_{n+1}} \cap S \to \mathbb Q$ bounded by $g \cup f_{n+1}$ via the inductive assumption. 

Finally let $f_q = \bigcup_{n <  \omega} \hat{f}_n \cup f_n$. This function is then defined on all of $T_{ \leq{\beta}}$ and keeps all requisite promises and is bounded by $g$ so we're done. 
\end{proof}

Next I show how to add promises.  Given two promises $\Gamma$ and $\Psi$ I write $\Psi \subseteq \Gamma$ to mean that $\beta(\Gamma) \geq \beta(\Psi)$ and for all $\gamma \geq \beta (\Gamma)$ we have that $\Psi(\gamma) \subseteq \Gamma (\gamma)$. Also, I will write $\Gamma \cup \Psi$ to mean the promise $\Delta$ so that $\beta(\Delta) = {\rm max}\{\beta(\Gamma), \beta(\Psi)\}$ and for all $\gamma \geq \beta(\Delta)$ $\Delta(\gamma) = \Gamma(\gamma) \cup \Psi(\gamma)$.

\begin{lemma}
Suppose $p \in \mathbb P$ is of height $\alpha$, $\beta \geq \alpha$ and $g:T_\beta \to \mathbb Q$ is a finite function bounding $f_p$. Let $\Psi_g$ be a promise so that $\beta(\Psi_g) \geq \beta$ and for all $H \in \Psi_g (\beta(\Psi_g))$ if $h \in H$ then for each $t \in {\rm dom}(h)$ $t \hook \beta$ is in the domain of $g$ and $ h(t) \geq g(t \hook \beta)$. Then there is an extension $q \leq p$ so that $\Gamma_q \supseteq \Psi_g$. Moreover $q$ can be chosen to have any height greater than or equal to $\alpha$.
\label{basis}
\end{lemma}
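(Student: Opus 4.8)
The plan is to let $q$ be the condition whose promise is the union $\Gamma_q \mdf \Gamma_p \cup \Psi_g$ and whose specializing function $f_q$ is a suitable extension of $f_p$ produced by Lemma~\ref{extensionlemma}. First I would settle the bookkeeping for the promise. Let $\delta \geq \alpha$ be the height one wants for $q$, and choose the first level $\beta(\Gamma_q)$ to be any countable ordinal that is simultaneously $\geq \beta(\Psi_g)$, strictly above $\beta(\Gamma_p)$, and at least $\delta$; then put $\Gamma_q(\gamma) = \Gamma_p(\gamma) \cup \Psi_g(\gamma)$ for all $\gamma \geq \beta(\Gamma_q)$. One checks that $\Gamma_q$ is again a promise: the projection property is inherited levelwise from $\Gamma_p$ and $\Psi_g$, since a union of projections is the projection of the union. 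With these choices the two order requirements $q \leq p$ (namely $\beta(\Gamma_q) > \beta(\Gamma_p)$ and $\Gamma_q(\gamma) \supseteq \Gamma_p(\gamma)$) and $\Gamma_q \supseteq \Psi_g$ hold by construction, so everything reduces to producing a legitimate $f_q$ keeping $\Gamma_q$.

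Next I would build $f_q$. Applying Lemma~\ref{extensionlemma}, whose \emph{moreover} clause lets one extend $f_p$ while keeping the partial specializing function bounded by the finite function $g$, I obtain $f_q \supseteq f_p$ which keeps $\Gamma_p$ and stays bounded by $g$, in the sense that $f_q(u) < g(u \hook \beta)$ for every $u \in \dom(f_q)$ with $u \hook \beta \in \dom(g)$; a second application of the lemma, if the target height $\delta$ differs from the level of $g$, raises the height to the prescribed $\delta$ without disturbing the promise already attached. Setting $q \mdf (f_q, \Gamma_q)$, it then remains only to verify that $f_q$ keeps $\Gamma_q$, that is, that it keeps both $\Gamma_p$ — already guaranteed — and $\Psi_g$.

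The crux, and the step I expect to be the main obstacle, is checking that $f_q$ keeps $\Psi_g$; this is exactly where the hypothesis on $\Psi_g$ is used. By the projection property it suffices to treat a single requirement $H \in \Psi_g(\gamma)$ for $\gamma \geq \beta(\Gamma_q)$. Every $h \in H$ satisfies $h(t) \geq g(t \hook \beta)$ for each $t \in \dom(h)$, and these inequalities survive projection, since the minimum defining $h \hook \gamma$ remains $\geq g(\cdot \hook \beta)$. Combining this with the invariant $f_q(u) < g(u \hook \beta)$ gives, at the relevant level, $h(t) \geq g(t \hook \beta) > f_q(t \hook \,\cdot\,)$, so that \emph{every} $h \in H$ bounds $f_q$. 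Thus the bounding half of ``fulfills'' is automatic, and fulfilling $H$ collapses to the purely combinatorial requirement that for each finite $\tau \subseteq T_\gamma$ some $h \in H$ has domain disjoint from $\tau$ — which holds because the requirements of $\Psi_g$ are infinite families of finite functions whose domains can be chosen to avoid any prescribed finite set. Hence $f_q$ fulfills every requirement of $\Psi_g$, so it keeps $\Psi_g$, and therefore $q = (f_q, \Gamma_q) \in \mathbb P$ is the desired extension. The delicate point throughout is the interplay between the level $\beta$ of $g$, the first level $\beta(\Psi_g)$ of the promise, and the chosen height $\delta$, which the invariant ``$f_q$ stays below $g$'' is designed to manage.
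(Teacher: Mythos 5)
Your proposal is correct and follows essentially the same route as the paper: the key observation in both is that the hypothesis $h(t) \geq g(t\hook\beta)$ makes any specializing function bounded by $g$ automatically keep $\Psi_g$, so $\Gamma_p \cup \Psi_g$ can simply be attached and Lemma~\ref{extensionlemma} handles the height. The paper just performs the two steps in the opposite order --- it notes that $(f_p, \Gamma_p \cup \Psi_g)$ is already a condition and only then extends, which spares the bookkeeping of carrying the ``bounded by $g$'' invariant through the extension.
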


Following \cite[Lemma 4.4]{AbrahamShelah93} we refer to the $g$ in the above lemma as a {\em basis} for the promise $\Psi_g$ and say that $g$ {\em generates} $\Psi_g$.

\begin{proof}
Note that the promise $\Psi_g$ is constructed so that any condition bounded by $g$ keeps it. Thus in fact $q = (f_p, \Gamma_p \cup \Psi_g)$ is as needed. For the moreover part, now use Lemma \ref{extensionlemma} to strengthen further.
\end{proof}

Intuitively the previous lemma states that if $g$ bounds some condition, it's not dense to insist that extensions are not bounded by $g$. In particular, we can always avoid growing above $g$. This is key for the proof of properness and it's from here that the need for promises stems. To prove that $\mathbb P$ is proper I will need the following lemma. 

\begin{lemma}
Let $\theta$ be sufficiently large and let $M \prec H_\theta$ be countable containing $T, S, \mathbb P, etc$. Let $p \in \mathbb P \cap M$ and let $\delta = M \cap \omega_1$. Note that $M \cap T = T_\delta$. Let $D \in M$ be a dense open subset of $\mathbb P$ and let $h:T_\delta \to \mathbb Q$ be a finite function bounding $f_p$. Then there is an extension $q \in D \cap M$ so that $f_q$ is also bounded by $h$.
\label{submodel}
\end{lemma}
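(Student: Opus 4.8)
The plan is to replace the ``external'' ceiling $h$, which is supported on $T_\delta$ and hence lies outside $M$, by an equivalent piece of internal data, and then to let the elementarity of $M$ do the work, using Lemmas~\ref{extensionlemma} and~\ref{basis} to keep the ceiling under control. First I would note that $ht(p)<\delta$ and that the nodes of $T$ in $M$ are exactly those below level $\delta$, so $f_p$ and $\Gamma_p$ lie in $M$ while $\dom(h)\subseteq T_\delta$ does not. Since $\dom(h)$ is finite and $T$ is normal there is a $\beta_0<\delta$ such that the projections $t\hook\beta$, $t\in\dom(h)$, are pairwise distinct for every $\beta\in[\beta_0,\delta)$; for such $\beta$ put $g_\beta:=h\hook\beta$. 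Then $g_\beta$ is a finite function with domain inside $T_\beta\subseteq M$ and values among the rationals $h(t)$, so $g_\beta\in M$, it bounds $f_p$ because $h$ does, and since the bound only compares a condition's top-level values with the numbers $h(t)$ one checks that for any $r$ with $ht(r)<\beta$ one has ``$g_\beta$ bounds $f_r$'' if and only if ``$h$ bounds $f_r$''. So it is enough to find a suitable $\beta<\delta$ and a condition $q\in D\cap M$ with $q\le p$, $ht(q)<\beta$, and $g_\beta$ bounding $f_q$.

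I would fix the level $\beta$ next. By density and $M\prec H_\theta$ there is already some $q_0\in D\cap M$ with $q_0\le p$; let $\gamma_0=ht(q_0)<\delta$ and choose any $\beta\in[\max(\beta_0,\gamma_0+1,ht(p)+1),\delta)$, so that $\beta\in M$ and $g_\beta\in M$. The heart of the argument is then the single claim
\[
E_\beta:=\{\,q\le p : q\in D \text{ and } g_\beta \text{ bounds } f_q\,\}\neq\emptyset .
\]
Granting this, $E_\beta$ is defined from the parameters $p,D,g_\beta,\beta\in M$, so $H_\theta\models E_\beta\neq\emptyset$ reflects into $M$ and yields $q\in E_\beta\cap M$. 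Such a $q$ lies in $D\cap M$, extends $p$, satisfies $ht(q)<\beta<\delta$ because $g_\beta$ bounds $f_q$, and therefore, by the bound-equivalence of the first paragraph, has $f_q$ bounded by $h$, as required.

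To prove $E_\beta\neq\emptyset$ I would use the promise machinery to keep the ceiling while climbing into $D$. Applying Lemma~\ref{basis} to the basis $g_\beta$ produces the promise $\Psi_{g_\beta}$ and a condition $p_\beta=(f_p,\Gamma_p\cup\Psi_{g_\beta})\le p$; by the remark following that lemma, insisting on staying bounded by $g_\beta$ is compatible with extending $p_\beta$, and the ``moreover'' clause of Lemma~\ref{extensionlemma} lets me raise the height of a condition bounded by $g_\beta$ to any value below $\beta$ while preserving that bound. Since $D$ is met below $p$ at some height $<\beta$, the plan is to interleave these bounded extensions with meeting $D$: climb, staying under $g_\beta$, until a condition of $D$ is reached, and use that $D$ is open (downward closed) to conclude that the bounded condition one lands on is itself in $D$, producing an element of $E_\beta$.

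The main obstacle is precisely this last interleaving, i.e.\ meeting the arbitrary dense open set $D$ without ever being forced above the ceiling. The difficulty is genuine and one-directional: extensions only enlarge the specializing function, so once a value along one of the finitely many branches through $\dom(h)$ passes the corresponding $h(t)$ it can never be lowered again, and a single careless step into $D$ can spoil the bound permanently. What is meant to make the interleaving possible is the design of $\mathbb P$: the promise $\Psi_{g_\beta}$ generated by $g_\beta$ records the ceiling and, carried coherently up to the limit level $\delta$ via the projection property of promises, keeps the relevant branches under their ceiling values, while $D$, being definable from parameters in $M$, cannot be tailored to the specific branches through $\dom(h)$, all of which reach the external level $\delta=M\cap\omega_1$. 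Making this compatibility precise, and in particular checking that staying bounded by $g_\beta$ really can be maintained through the limit stages where new nodes of $S$ enter, is the delicate point, and it is exactly here that the coherence of promises and the countable completeness of requirements are used.
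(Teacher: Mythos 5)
Your reduction is fine and matches the paper's setup: projecting $h$ to a level $\beta<\delta$ where the finitely many nodes of $\dom(h)$ have distinct projections gives a finite function $g_\beta\in M$ with the property that boundedness by $g_\beta$ and by $h$ agree for conditions of height below $\beta$, and by elementarity it then suffices to show that some $q\le p$ in $D$ is bounded by $g_\beta$. But that nonemptiness claim \emph{is} the lemma, and your argument for it does not go through. The mechanism you propose --- add the promise $\Psi_{g_\beta}$ of Lemma~\ref{basis} and then ``climb, staying under $g_\beta$, until a condition of $D$ is reached'' --- misreads what promises do. Keeping a promise never forces a condition to be bounded by any \emph{particular} finite function: a requirement is a countably \emph{infinite} family of finite functions, and fulfilling it only says that for each finite avoidance set $\tau$ \emph{some} member of the family with domain disjoint from $\tau$ bounds the condition. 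A single finite function $g_\beta$ cannot be made into a requirement, and $\Psi_{g_\beta}$ is designed so that it is kept \emph{for free} by anything bounded by $g_\beta$ (that is the content of Lemma~\ref{basis}); it does not conversely constrain extensions to stay under $g_\beta$. Lemma~\ref{extensionlemma} lets you raise height while preserving a given bound, but meeting the arbitrary dense set $D$ is a different matter: $D$ could in principle always demand growth above $g_\beta$ along one of the relevant branches, and you explicitly leave this --- the entire difficulty --- unresolved (``making this compatibility precise \dots is the delicate point'').

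The paper closes exactly this gap, and by a quite different route: it argues by contradiction. Assuming no $q\le p$ in $D\cap M$ is bounded by $h$, every projection $h_{\gamma'}$ ($\gamma\le\gamma'<\delta$) is a ``bad'' ceiling, and one defines the set $B$ of all bad $n$-tuples in $S$ (tuples $\vec{s}$ whose associated ceiling $h_{\vec s}$ bounds $f_p$ but bounds no extension of $p$ in $D$). Taking a derivative $B_0\supseteq B_1\supseteq\cdots$ down to a fixed point $B_\infty$ and running an ultrafilter argument, the paper shows that every tuple in $B_\infty$ has two disjoint extensions in $B_\infty$ --- otherwise one extracts a cofinal branch of $S$, contradicting that $S$ is Aronszajn. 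Bootstrapping yields a single level carrying infinitely many pairwise disjoint bad tuples; their ceilings form a \emph{legitimate} requirement, hence a promise $\Psi$ with basis $h_\gamma$ that lies in $M$ and is kept by $p$. Extending $(f_p,\Gamma_p\cup\Psi)$ into $D\cap M$ then produces a condition bounded by some bad ceiling, contradicting badness. This use of the width hypothesis on $S$ (no cofinal branches) and the infinite-disjoint-family structure of requirements is the real content of the lemma, and it is absent from your proposal.
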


Implicit in the proof below is the following fact which follows from elementarity: for any $\theta$ sufficiently large, and $M \prec H_\theta$ with $S, T \in M$, $S \cap M$ is unbounded in $T \cap M$.

\begin{proof}
Suppose the statement of the lemma is false and let $M$, $T$, $S$, $p$, $D$, $h$ etc be a counter example. Let me fix that $ht(p) = \alpha$. Note that if $q \leq p$ and $q \in D \cap M$, then $q$ is not bounded by $h$. I will reach a contradiction by showing how to add a promise to $p$ as in Lemma \ref{basis} which ensures that {\em any} further extension is bounded by $h$. Let us enumerate the domain of $h$ by $t^h_0, ..., t^h_{n-1}$. Also, without loss, assume that all projections of each $t^h_i$ project into $S$ since otherwise they do not matter. Since $T$ is normal, there is a least level $\gamma > \alpha$ so that for all $i < j < n$ the projections $t^h_i \hook \gamma$ and $t^h_j \hook \gamma$ {\em in the sense of $S$} (!!) are distinct. Let $h_\gamma$ be the projection of $h$ to this level. Note that $h_\gamma \in M$ (since it's finite), bounds $f_p$ and if $q \leq p$ is in $M \cap D$ then $q$ must not be bounded by $h_\gamma$ since otherwise we would contradict our assumption. Thus we obtain that $M \models `` \forall q \leq p \, {\rm if} \; q \in D \; {\rm then} \; q \; {\rm is \; not \; bounded \; by} \; h_\gamma$". Note by elementarity this is also true in $V$. Note also, that since the fact that $\gamma$ was least was not used, we could have chosen {\em any} $\gamma ' \geq \gamma$ and the statement above would have held with $\gamma$ replaced by $\gamma '$. Hence $M$ thinks there are cofinally many $\gamma '$ below $\delta$ so that $h_{\gamma '}$ is as in the statement $M$ models above (this uses the fact mentioned before the start of this proof). 

Now, I want to use the property described of $h_\gamma$ to define a collection of $n$-tuples of $S$. Let's say that an $n$-tuple $\vec{s} = \langle s_0, ..., s_{n-1}\rangle$ of elements of $S$, all of the same height $\geq \gamma$, is {\em bad} if its projection to level $\gamma$ is ${\rm dom} (h_\gamma)$ (say $s_i \hook \gamma = t_i^h \hook \gamma$) and the function $h_{\vec{s}}$ whose domain is $\vec{s}$ and, to each $s_i$ assigns the rational $h(t^h_i)$ bounds $f_p$ but is such that there is no $q \leq p$ so that $q \in D$ and $f_q$ is bounded by $h_{\vec{s}}$. In particular the domain of $h_{\gamma '}$ is bad for each $\gamma ' \geq \gamma$, but other sets, which are not the projection of the domain of $h$ may also be bad. Let $B \subseteq S^n$ be the collection of all bad tuples. For simplicity we let $B(i)$ be the collection of all bad tuples on level $i < \omega_1$. Note that since $M \models ``\{ i < \omega_1 \; | \; B(i) \neq \emptyset\} {\rm \; is \; unbounded}"$ this is true in $V$. Also $B$ is closed downwards above $\gamma$ in the sense that if $\vec{s} \in B(j)$ and $\gamma < i < j$ then $\vec{s} \hook i \in B(j)$. If $\vec{s}_0, \vec{s}_1 \in S^n$ let's write $\vec{s}_0 \leq \vec{s}_1$ if the elements of $\vec{s}_0$ are componentwise below the elements of $\vec{s}_1$ on the tree. We define recursively $B_0 = B$, $B_{i+1} = \{\vec{s} \in B_i \; | \; {\rm for \; uncountably \; many \; levels \; } j \exists \vec{s}' \in B_i(j) \, \vec{s} \leq \vec{s}'\}$, and $B_\lambda = \bigcap_{i < \lambda} B_i$ for $\lambda$ limit. Note that ${\rm dom}(h_\gamma) \in B_i$ for every $i$. Also observe by construction that if $i \leq j$ then $B_i \supseteq B_j$. Let $B_\infty = B_\rho$ where $\rho$ is the least so that $B_\rho = B_{\rho + 1}$. 

\begin{claim}
Every $\vec{s} \in B_\infty$ has two disjoint extensions under $\leq$ in $B_\infty$.
\end{claim}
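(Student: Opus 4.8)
The plan is to argue by contradiction. Fix $\vec s \in B_\infty$ and suppose it has \emph{no} two disjoint extensions in $B_\infty$; reading \say{disjoint} as \say{incompatible}, this means that any two elements of $B_\infty$ lying $\leq$-above $\vec s$ have a common upper bound in $S^n$. I will show that this forces $S$ to contain an uncountable chain, contradicting that $S$ is Aronszajn. The first thing to record is the structural content of the definition of $B_\infty$: since $B_\infty = B_{\rho+1}$, membership $\vec s \in B_\infty$ unpacks to the statement that for uncountably many levels $j$ there is some $\vec s' \in B_\infty(j)$ with $\vec s \leq \vec s'$. In particular $\vec s$ has extensions in $B_\infty$ cofinally in $\omega_1$.

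The key elementary observation is that any two compatible extensions of the fixed tuple $\vec s$ are automatically $\leq$-comparable. Indeed, if $\vec a, \vec b \geq \vec s$ have a common upper bound $\vec c \in S^n$, with $\vec a$ on level $j_a$ and $\vec b$ on level $j_b$ and (say) $j_a \leq j_b$, then coordinatewise $a_i = c_i \hook j_a = (c_i \hook j_b) \hook j_a = b_i \hook j_a$ for each $i < n$, using that these are $T$-projections and that the bad tuples are level tuples. Hence $\vec a = \vec b \hook j_a$, so $\vec a \leq \vec b$. Under the contradiction hypothesis every pair of $B_\infty$-extensions of $\vec s$ is compatible, so by this observation the collection of all $B_\infty$-extensions of $\vec s$ is linearly ordered by $\leq$, i.e. it is a $\leq$-chain.

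Now I combine the two facts. By the first paragraph this chain meets uncountably many levels, and since two comparable tuples on a common level must coincide, the chain contains exactly one tuple on each level it meets; thus it is a chain of size $\aleph_1$, cofinal in $\omega_1$. Reading off a single coordinate $i < n$ along this chain yields a $\leq_T$-increasing family $\{s_{k,i}\}$ of elements of $S$, one on each of uncountably many distinct levels, hence an uncountable chain inside $S$. Extending it to a maximal chain produces a branch of $S$ of size $\omega_1$, contradicting that $S$ is an $(\omega_1, {\leq}\omega_1)$-Aronszajn tree. This contradiction yields two extensions of $\vec s$ in $B_\infty$ with no common upper bound, which is the claim.

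The only genuinely delicate point is the precise reading of \say{disjoint}. The argument above produces two extensions that are \emph{incompatible} (some coordinate is $T$-incomparable), which is exactly the splitting behaviour needed to feed $B_\infty$ into a promise via Lemma \ref{basis}. If instead one insists that the two extensions have literally disjoint node-sets, I would take the incompatible pair just obtained and push it further upward inside the perfect set $B_\infty$, separating the coordinates that were still comparable by invoking normality of $S$ together with the abundance of $B_\infty$-extensions guaranteed by $B_\infty = B_{\rho+1}$. Checking that this coordinatewise separation can be performed while keeping every intermediate tuple inside $B_\infty$ is the part that requires care; the core of the proof, however, is the clean no-branch argument above, whose engine is precisely that $S$ has no uncountable branch.
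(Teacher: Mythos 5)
There is a genuine gap, and it sits exactly where you flagged it: the reading of \say{disjoint}. The way the claim is used afterwards --- bootstrapping to a level $i$ on which $B_\infty(i)$ contains \emph{infinitely many pairwise disjoint} bad tuples, whose associated functions $h_{\vec s}$ form a requirement (and a requirement is only fulfillable if, for every finite $\tau$, some member has domain disjoint from $\tau$) --- forces \say{disjoint} to mean that every coordinate of one tuple is $T$-incomparable with every coordinate of the other. This is confirmed by how the paper negates the claim: \say{not disjoint} is \say{there exist $k,k'$ with $z^j_k\hook i = z^i_{k'}$}. That is a far weaker hypothesis than your \say{all pairs of extensions are compatible in the product order}; it does not make the set of $B_\infty$-extensions of $\vec s$ a chain, so your clean no-branch argument never gets off the ground against the true negation. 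What you have correctly proved is only that $\vec s$ has two \emph{incompatible} extensions, which is strictly weaker (two same-level tuples sharing a node are already incompatible), and which is useless for the requirement machinery.

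The repair sketched in your last paragraph does not close the gap. If $a_k \leq_T b_{k'}$, then every $\leq$-extension of $\vec b$ inside $B_\infty$ keeps its $k'$-th coordinate above $b_{k'}$, hence above $a_k$, so pushing upward can never destroy that comparability; and arranging that the \emph{new} coordinates of the two pushed-up tuples be pairwise incomparable while staying inside $B_\infty$ is precisely the content of the claim, so the suggested fix is circular. The paper's proof supplies the missing engine via an ultrafilter argument in the style of \cite[Lemma 16.18]{JechST}: letting $W$ be the $B_\infty$-extensions of $\vec s$ and $Y_{x,k}$ the set of $\vec z\in W$ whose $k$-th coordinate is comparable with $x$, the negation of disjointness gives $W=\bigcup_{l,k<n}Y_{z^i_l,k}$ for each level $i$; an ultrafilter on $W$ concentrating on unboundedly high tuples picks out indices $l_i,k_i$, a pigeonhole stabilizes $k_i=k$ on an uncountable $I$, and the elements $z^i_{l_i}$ for $i\in I$ are then pairwise comparable, generating an uncountable branch of $S$. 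You need this (or an equivalent Ramsey-type extraction) to pass from \say{some pair of coordinates is always comparable} to an actual chain; compatibility alone will not do it.
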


This is essentially \cite[Lemma 16.18]{JechST}, modified to the current context.
\begin{proof}[Proof of Claim]
Suppose not and let $\vec{s} \in B_\infty$ be a counter example. I will use $\vec{s}$ to define a branch through $S$ contradicting the fact that $S$ is Aronszajn. Let $W \subseteq B_\infty$ be the collection of all $\vec{u}$ extending $\vec{s}$ and for each $i < \omega_1$ let $W(i)$ denote the set of tuples $\vec{z} \in W$ of height $i$. Since every element of $B_\infty$ has extensions on cofinally many levels, $W$ has elements on all levels above the height of $\vec{s}$. Let's denote this height by $\gamma_s$. Finally note that since we're assuming that $\vec{s}$ does not have disjoint extensions, given any $\gamma_s < i < j < \omega_1$ we have that if $\vec{z}^i \in W(i)$ and $\vec{z}^j \in W(j)$ then it must be that there is a $k, k' < n$ for which $z^j_k \hook i = z^i_{k'}$.

Let $U$ be an ultrafilter on $W$ all of whose elements contains tuples unboundedly high up in $S$. For any $x \in S$ and $k < n$ let $Y_{x, k}$ be the collection of all elements $\vec{z} \in W$ so that $x$ is comparable with the $k^{\rm th}$ element of $\vec{z}$. Notice by the above assumption, we get that $W = \bigcup_{l < n}\bigcup_{k < n} Y_{z^i_l, k}$ where $\vec{z}^i \in W(i)$ for any $i \in (\gamma_s, \omega_1)$. Since $U$ is an ultrafilter, for any such $i$ we must have that there is an $l_i < n$ and a $k_i < n$ so that $Y_{z^i_{l_i}, k_i} \in U$. But then for some $k$ the set $I = \{ i \in (\gamma_s, \omega_1) \; | \; k_i = k\}$ is uncountable. I claim that $z^i_{l^i} \leq_S z^j_{l^j}$ for any $i< j \in I$. To see this, note that since $Y^i_{z^i_{l_i}, k} \cap Y^i_{z^j_{l_j}, k} \in U$ so there is a $\vec{z} \in W$ of height $\lambda$ in this intersection for some $j < \lambda$ and hence $z^i_{l_i}, z^j_{l_j} \leq_S z_k$ so $z^i_{l_i}, z^j_{l_j}$ are comparable. But now the set $\{z^i_{l_i} \; | \; i \in I\}$ must generate a cofinal branch in $S$, contradiction.
\end{proof}

By bootstrapping the above argument, there is a level $i$ so that $B_\infty (i)$ has infinitely many disjoint bad tuples. Let $i$ be such a level. But now such a level generates a promise $\Psi$ whose basis is $h_\gamma$: 
Such a promise is in $M$, by running the argument above in $M$, and moreover, $p$ keeps this promise so we can add it to $p$ (in $M$). Concretely, the promise $\Psi$ is defined by looking at the set $\{h_{\vec{s}} \; | \; \vec{s}{\rm \; projects \; into} \; A\}$ for $A\subseteq B_\infty(i)$ an infinite pairwise disjoint set and considering requirements at all higher levels projecting to this. Therefore there is a $p' \leq (f_p, \Gamma_p \cup \Psi)$ in $M$ by Lemma \ref{basis}. But now let $q \leq p'$ be any element in $D \cap M$. Then $q$ keeps the promise $\Psi$ but this contradicts the definition of a bad function. 
\end{proof}

\begin{lemma}
$\mathbb P$ is proper. In fact, $\mathbb P$ is dee-complete for some simple completeness system $\mathbb D$.
\end{lemma}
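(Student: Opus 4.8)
The plan is to prove dee-completeness directly, which by the remark following the definition of dee-completeness will automatically give properness (since a completeness system for $\mathbb P$ yields $(N,\mathbb P)$-generic conditions). I need to exhibit, for each sufficiently large $\theta$, a simple completeness system $\mathbb D$ on $\theta$ for $\mathbb P$ such that for every countable $M \prec H_\theta$ with $\mathbb P \in M$ and every $p \in \mathbb P \cap M$, some $A \in \mathbb D(M,\mathbb P,p)$ consists entirely of $M$-generic filters that admit a lower bound in $\mathbb P$. The natural move is to use Lemma \ref{submodel} as the engine: it says that any finite function $h:T_\delta \to \mathbb Q$ bounding $f_p$ (where $\delta = M \cap \omega_1$) can be extended into any prescribed dense open $D \in M$ while staying bounded by $h$. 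The key point is that bounding by a fixed $h$ on level $\delta$ is exactly what guarantees that the limit of a generic sequence through $M$ remains a genuine condition at height $\delta$.

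The heart of the argument is the construction of the lower bound. Given an $M$-generic filter $G$, I want to form $q$ with $f_q = \bigcup_{r \in G} f_r$ together with a promise gathering all the $\Gamma_r$ for $r \in G$. The function $\bigcup_{r\in G} f_r$ is defined on $T_{<\delta} \cap S$, and I must extend it to level $\delta$ and verify it keeps every promise appearing in $G$. To do this I fix in advance a finite function $h:T_\delta \to \mathbb Q$ and demand, via Lemma \ref{submodel}, that every condition entering $G$ remains bounded by $h$; this keeps the values on the generic branch below the $\delta$-level bounded, so that assigning $f_q(t) = h(t)$ for $t \in T_\delta \cap S$ (or a suitable rational below $h(t)$ and above the supremum along the branch to $t$) produces a strictly increasing function. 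The reason a single $h$ can be maintained throughout the genericity construction is precisely Lemma \ref{basis}: bounding by $h$ is never ruled out by density, so the requirement ``$f_r \le h$'' is compatible with meeting every dense set. The completeness system $\mathbb D(M,\mathbb P,p)$ is then indexed by such bounding functions $h$ (coded by parameters $u \in H_{\omega_1}$), with $A_h$ the set of generics obtained by the canonical construction through the collapse $\bar M$ relative to $h$; the formula $\phi$ in the definition of simplicity records that the enumerated generic was built meeting all dense sets while staying $h$-bounded.

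The two conditions defining a completeness system must be checked. The first, that each $A_h \subseteq \mathrm{Gen}(M,\mathbb P,p)$, is immediate from the construction since we meet every dense open set of $M$. The countable-completeness condition (2) is the delicate point: given countably many $A_{h_i} \in \mathbb D(M,\mathbb P,p)$, I must produce a single generic lying in every $A_{h_i}$. Since each $A_{h_i}$ prescribes staying bounded by $h_i$, I take a finite function $h$ refining all the $h_i$ simultaneously — i.e. $h(t) \le \min_i h_i(t \hook \delta)$ on the common part of the domains, shrinking values so that $h$ bounds $f_p$ and dominates none of the constraints — and run the single $h$-bounded genericity construction, whose output then lies in each $A_{h_i}$. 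Working inside the Mostowski collapse $\bar M$ with $\delta = M \cap \omega_1$ standing in for the ``top'' level is what makes the completeness system simple, since everything is computed in $H_{\omega_1}$ from the collapse, the names for the $h_i$, and the bookkeeping enumerations.

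The main obstacle, and the step I would spend the most care on, is the interaction between keeping promises and staying $h$-bounded in the limit. A promise $\Gamma_r$ demands that at each level above $\beta(\Gamma_r)$ some requirement be fulfilled, and I must verify that the limit function $f_q$, built by filling in level $\delta$ from a single bounding $h$, still fulfills every requirement $H \in \Gamma_r(\delta)$ — that is, for every finite $\tau \subseteq T_\delta$ there is $h' \in H$ bounding $f_q$ with domain disjoint from $\tau$. This is exactly where the requirements being countably infinite families is used, and where a bookkeeping argument in the style of the successor case of Lemma \ref{extensionlemma} is needed: one dovetails meeting dense sets, listing the promises in $G$, and fulfilling their requirements at level $\delta$, all while respecting the fixed upper bound $h$. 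The compatibility of these three demands is guaranteed by Lemma \ref{basis} (bounding is never dense to avoid) together with Lemma \ref{submodel} (density can always be met $h$-boundedly), so the construction goes through, but organizing the dovetailing so that no requirement is starved is the genuinely technical core.
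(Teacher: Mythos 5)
Your overall strategy (use Lemma \ref{submodel} to build an $M$-generic sequence whose union stays bounded at level $\delta$, then read off a simple completeness system from the data needed to certify a lower bound) is the right one and matches the paper's. But the specific parametrization you choose --- indexing $\mathbb D(M,\mathbb P,p)$ by a single finite bounding function $h:T_\delta\to\mathbb Q$ --- has a genuine gap, and it sits exactly at the step you flag as delicate. First, one fixed finite $h$ cannot certify a lower bound: the limit function $\bigcup_{r\in G}f_r$ must be extended to all of $T_\delta\cap S$, a countably infinite set, and must fulfill every requirement $H\in\Gamma_r(\delta)$ for every $r\in G$ against every finite $\tau\subseteq T_\delta$; this forces the extension to be bounded by infinitely many different finite functions (one member of each requirement, with domain avoiding each $\tau$), so the data controlling the limit is an increasing $\omega$-chain of finite bounding functions whose union is an infinite partial function on $T_\delta$, not one fixed $h$. (Your own closing paragraph describes exactly this dovetailing, which is the paper's construction, but it is incompatible with your indexing.) Second, and fatally for condition (2) of the definition of a completeness system, your verification of countable completeness fails: the $A_{h_0},A_{h_1},\dots$ are arbitrary, with no compatibility assumption on the $h_i$; their domains union to an infinite set, so no finite $h$ refines them all, and even pointwise the infimum of countably many rationals strictly above $f_p(t\hook(\alpha+1))$ need not be strictly above it (take $h_i(t)=f_p(t\hook(\alpha+1))+2^{-i}$), so the combined bounding constraint can be unsatisfiable.

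The paper avoids this by indexing the system not by bounding functions but by pairs $u=\langle B,c\rangle$, where $B$ is a countable set of branches through $T_{<\delta}$ meeting $S$ and $c$ assigns to each condition a countable set of height-$\delta$ requirements projecting onto its promise; a generic is \emph{good} for $u$ if its union extends to a partial specializing function on the nodes determined by $B$ fulfilling $c(\bar p)$. Countably many such codes amalgamate by taking unions (a countable union of countable sets of branches and requirements is again such a code), and a good generic for the union is good for each piece --- this is precisely why requirements are defined as countably \emph{infinite} families of finite functions, so that countably many of them can be fulfilled simultaneously by the diagonal construction. If you reorganize your parameter $u$ along these lines, the rest of your argument (the bounded genericity construction via Lemmas \ref{submodel} and \ref{basis}, run with a growing sequence of finite bounds) does go through as the existence proof that some $A_u$ consists of generics with lower bounds.
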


\begin{proof}
Work in the setting of Lemma \ref{submodel}. I want to prove the existence of a master condition for $M$. Let $\langle D_n \; | \; n < \omega \rangle$ be an enumeration of the dense open subsets of $\mathbb P$ in $M$. Let $p \in \mathbb P \cap M$ and let $\langle t_i \; | \; i < \omega \rangle$ enumerate the elements of $T_\delta$ which project into $S$ and let $\langle \tau_k \; | \; k < \omega\rangle$ enumerate all the finite subsets of $T_\delta$. I want to define a sequence $p \geq p_0 \geq p_1 \geq ... \geq p_{n} \geq ...$ so that $p_i \in D_i \cap M$ for all $M$ and there is a condition $q$ extending the union of the $p_i$'s. Such a $q$ defines a generic over $M$. The idea is to use Lemma \ref{submodel} $\omega$-many times to make sure that the union of an $M$ generic filter is bounded and hence can be extended into a further condition. I will then extract from the proof a definition of the generics bounded by such a $q$ and this will be used to define a simple completeness system as needed. 

Fix an enumeration in order type $\omega$ of all triples $e_l = (m_l, n_l, k_l)$ so that $m_l, n_l, k_l \in \omega$ and the first occurrence of $m$ in the first coordinate is after the $m^{\rm th}$ element of the enumeration and each such triple appears infinitely often. Now, using Lemma \ref{submodel}, recursively define conditions $p_{i+1}$ and functions $h_i$ satisfying the following conditions:

i) $p_{i+1} \leq p_i$ and $p_{i+1} \in D_{i+1}$ so that we ensure that $p_{i+1}$ fulfills the $n_i^{\rm th}$ requirement of $p_{m_i}$ with respect to $\tau_{k_i}$ (this uses Lemma \ref{submodel}). Let $t_{j_{i+1}}$ be the node of the tree that we bound $p_{i+1}$ by in this step. We can assume inductively that $t_{j_{i+1}}$ is not in the domain of $h_i$.

ii) $h_i$ has a finite domain consisting of all $t_0, ..., t_i$ and $t_{j_0}, ..., t_{j_i}$ which project into $S$, bounds $p_i$ and is at most $(f_{p_i}(t_{j_i} \hook ht(p_i)) + h_{t_{j_i}})/2$ where $h_{t_{j_i}}$ is the rational in the range of the $n^{\rm th}_i$ requirement of $\Gamma_{p_i} (\delta)$ which corresponds to $t_{j_i}$ and was chosen in the $i^{\rm th}$ step of the process.

iii) $h_{i+1}$ is a finite function from $T_\delta$ to $\mathbb Q$ bounding $f_{p_{i+1}}$ which extends $h_i$ to include in its domain $t_{i +1}$ and $t_{j_{i+1}}$ if these project into $S$ (note potentially these nodes are the same).

It's clear by what we have done that such a sequence can be constructed and generates a generic filter on $M$. I need to show that there is a lower bound, $q$. Note that $\bigcup_{n < \omega} f_{p_n}$ is a partial specializing function defined on $T_{ < \delta}$. I claim that we can extend it to a function defined on $T_\delta$ which keeps the promises $\bigcup_{n < \omega} \Gamma_{p_n}$. Indeed, let $q(t_i) = h_i(t_i)$. This is defined, since we insisted that $t_i \in {\rm dom}(h_i)$. Also, since $h_i$ bounded all $p_i$, $q(t_i)$ is at least the supremum of the values of $f_n (t_i \hook \beta)$ for all $\beta < \delta$. What needs to be checked is that $f_q$ actually keeps all the promises in the $p_i$'s. This is what was planned for though. If $H \in \Gamma_q(\delta)$ then $H \in \Gamma_{p_i} (\delta)$ for some $i$ and for any $\tau \subseteq T_\delta$ finite, there was a stage where we ensured that $f_q$ was bounded by some $h_n$ which included being bounded by some $H$ on a node disjoint from $\tau$. Then, from that stage on, since all $p_j$'s were bounded by this $h$, we get that $f_q$ keeps that instance of the promise.

Thus we have shown that $q$ is an $(M, \mathbb P)$-master condition so $\mathbb P$ is proper. It remains to show that it is in fact dee-complete. To do this, we need some simple way of coding generics of countable models with lower bounds such as those found in the previous paragraph. To this end, I start by defining the types of codes that are needed. Given a sufficiently large $\theta$ and a countable transitive $\sigma:\overline{M} \prec H_\theta$ so that $\sigma (\overline{\P}, \bar{T}, \bar{S}) = \P, T, S$, let us say that an element $u \in H_{\omega_1}$ {\em codes a suitable witness} for $\overline{M}$ and $\overline{\P}$ if $u$ codes a pair $\langle B, c\rangle$ so that:
\begin{enumerate}
\item
$B$ is a countable set of branches through $\overline{M} \cap T = T_\delta$ with $\delta = (\omega_1)^{\overline{M}}$ which intersect $S$ (so $B \subseteq T_\delta$ and hence can be coded by a real). 
\item
$c: \overline{\P} \to \mathcal P(\overline{M})$ is a function so that $c(\overline{p})$ is a countable set of requirements of height $\delta$ which are fulfilled by $f_{\bar{p}}$ and project to the promise $\Gamma_{\bar{p}}$. In other words, for each $\alpha < \delta$ and each $\bar{H} \in \Gamma_{\bar{p}}(\alpha)$ there is a $H \in c(\overline{p})$ so that $\bar{H} = \{h \hook \alpha \; | \; h \in H\}$. 
\end{enumerate}
Note that coding a suitable witness is definable in $H_{\omega_1}$. Now, if $\overline{M} \prec H_\theta$ and $u = \langle B, c\rangle$ codes a suitable witness for $\overline{M}$ and $\P$ and $\bar{p} \in \overline{\P} \cap \overline{M}$ let us say that a $\overline{\mathbb P}$-generic $\bar{G} \ni \bar{p}$ over $\overline{M}$ is {\em good} for $u$ if there is a function $f:B \to \mathbb Q$ so that $\bigcup_{\bar{q} \in \bar{G}} f_{\bar{q}} \cup \{f\}$ is a partial specializing function (restricted to the elements of $T_\delta \cap S$ determined by $B$) satisfying the requirements $c(\bar{p})$. Finally if $M = \sigma `` \overline{M}$ then a $\mathbb P$-generic $G$ over $M$ is {\em good for} $u$ if $G$ is generated by $\sigma `` \bar{G}$ for a $\bar{G}$ which is good for $u$. Note that if $u = \langle B, c\rangle$ is such that $B$ contains the set of branches with upperbounds in $T_\delta \cap S$ and $G$ is good for $u$ then $G$ has a lower bound: the pair consisting of the partial specializing function $\bigcup_{\bar{q} \in \bar{G}} \sigma(f_{\bar{q}}) \cup \{f\}$ and the promise generated by $c(\bar{p})$. Also, good generics exist for every condition and model by the argument for properness in the first half of this proof.

Finally we can define our completeness system by letting $\mathbb D(\overline{M}, \overline{\mathbb P}, \bar{p})$ be the set of $A_u$ for $u \in H_{\omega_1}$ where if $u$ codes a suitable witness for $\overline{M}$ and $\overline{\mathbb P}$ then $A_u$ is the set of generics which are good for $u$ and if $u$ does not code a suitable witness then $A_u$ is all generics. This is definable and satisfies the conditions of a completeness system. The only thing that is not immediately clear is the countable closure. This is why promises consist of countable sets of requirements: Suppose that $\{\langle B_i, c_i\rangle \; | \; i < \omega\}$ all code suitable witnessess, $B = \bigcup_{i < \omega} B_i$ and let $c$ be the function sending $\bar{p} \mapsto \bigcup_{i < \omega} c_i(\bar{p})$. Then $u = \langle B, c \rangle$ is a code for a suitable witness and any generic that is good for $u$ is good for all $\langle B_i, c_i\rangle$ hence $\bigcap_{i < \omega} A_{\langle B, c_i\rangle}$ is nonempty.


\end{proof}

Finally I prove that $\mathbb P$ is $\alpha$-proper for all $\alpha < \omega_1$. 
\begin{lemma}
Let $\alpha < \omega_1$ and let $\vec{N} = \langle N_i \; | \; i \leq \alpha\rangle$ be a tower of length $\alpha$ for $N_i \prec H_\theta$, $\theta$ sufficiently large with $\mathbb P \in N_0$. Then for any $p \in N_0 \cap \mathbb P$ there is a $q \leq p$ which is $(N_i, \mathbb P)$-generic simultaneously for every $i \leq \alpha$.
\end{lemma}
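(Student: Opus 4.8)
The plan is to prove, by induction on $\alpha < \omega_1$, a strengthening that tracks a bounding function so that the steps compose. Writing $\delta_i := N_i \cap \omega_1$, the inductive statement $(\star)_\alpha$ is: for every tower $\langle N_i \mid i \le \alpha\rangle$ for $H_\theta$ with $\mathbb P \in N_0$, every $p \in N_0 \cap \mathbb P$, and every finite $h : T_{\delta_\alpha} \to \mathbb Q$ bounding $f_p$, there is a $q \le p$ of height $\delta_\alpha$ that is $(N_i,\mathbb P)$-generic for all $i \le \alpha$ and with $f_q$ bounded by $h$. The lemma as stated is $(\star)_\alpha$ applied with the empty bounding function. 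The base case $(\star)_0$ is exactly the master condition construction in the proof that $\mathbb P$ is proper, run relative to $M = N_0$: starting from $p$ and requiring throughout that the finite functions $h_i$ bounding the $p_i$ there refine the given $h$ (legitimate since $h$ bounds $f_p$), that construction produces a $q \le p$ of height $\delta_0$ which is $(N_0,\mathbb P)$-generic and bounded by $h$.

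For the successor step $\alpha = \gamma+1$, I would first project $h$ to $\bar h := h \hook \delta_\gamma$. Since $\delta_\gamma \in N_{\gamma+1}$ and $T_{\delta_\gamma}$ is countable and lies in $N_{\gamma+1}$, we get $\bar h \in N_{\gamma+1}$, and a routine check (using $ht(p) < \delta_\gamma$) shows $\bar h$ still bounds $f_p$. As $\langle N_i \mid i \le \gamma\rangle \in N_{\gamma+1}$ and $(\star)_\gamma$ is true, hence holds in $N_{\gamma+1}$ by elementarity, applying it inside $N_{\gamma+1}$ to this subtower, $p$, and $\bar h$ produces $q' \in N_{\gamma+1}$ of height $\delta_\gamma$ that is $(N_i,\mathbb P)$-generic for all $i \le \gamma$ and bounded by $\bar h$. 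I then run the properness master condition construction relative to $M = N_{\gamma+1}$, starting from $q'$ and with external bound $h$; note $h \notin N_{\gamma+1}$, but the construction (via Lemma \ref{submodel}) never requires the bounding function to lie in the model. This yields $q \le q'$ of height $\delta_{\gamma+1}$ which is $(N_{\gamma+1},\mathbb P)$-generic and bounded by $h$, and since genericity is preserved under extension, $q \le q'$ is $(N_i,\mathbb P)$-generic for every $i \le \gamma+1$.

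For a limit $\alpha$, fix an increasing cofinal $\langle \alpha_n \mid n < \omega\rangle$ with $\alpha_0 = 0$, so that $\delta_{\alpha_n} \nearrow \delta_\alpha = N_\alpha \cap \omega_1$. I would recursively build $p = p_0 \ge p_1 \ge \cdots$ with $p_n \in N_{\alpha_n+1}$ of height $\delta_{\alpha_n}$, generic for all $N_i$ with $i \le \alpha_n$, and bounded by $h \hook \delta_{\alpha_n}$. Given $p_n$, the re-indexed segment $\langle N_i \mid \alpha_n+1 \le i \le \alpha_{n+1}\rangle$ is a tower of length $< \alpha$ lying in $N_{\alpha_{n+1}+1}$, which (since countable submodels in the tower are subsets of later ones) also contains $p_n$ and $h \hook \delta_{\alpha_{n+1}}$; applying $(\star)$ for this shorter tower inside $N_{\alpha_{n+1}+1}$ gives $p_{n+1}\le p_n$ in $N_{\alpha_{n+1}+1}$ of height $\delta_{\alpha_{n+1}}$, generic for $N_{\alpha_n+1},\dots,N_{\alpha_{n+1}}$ and bounded by $h\hook\delta_{\alpha_{n+1}}$, hence (being below $p_n$) generic for all $N_i$, $i \le \alpha_{n+1}$. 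Then $\bigcup_n f_{p_n}$ is a partial specializing function on $T_{<\delta_\alpha}\cap S$, which I extend to level $\delta_\alpha$ keeping the countably many promises in $\bigcup_n \Gamma_{p_n}$ and staying bounded by $h$, exactly as in Case II of Lemma \ref{extensionlemma}. The resulting $q$ lies below every $p_n$, is bounded by $h$, and is $(N_i,\mathbb P)$-generic for every $i < \alpha$; genericity for $N_\alpha = \bigcup_{i<\alpha}N_i$ is automatic, since any maximal antichain in $N_\alpha$ already lies in some $N_i$.

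The hard part is the bookkeeping that makes $(\star)_\alpha$ compose: the single top-level function $h$ must be threaded, through its projections $h \hook \delta_i$, down every model of the tower so that the separately constructed master conditions cohere into one condition. The genuine crux is the limit extension, where $f_q$ must be defined on $T_{\delta_\alpha}\cap S$ while simultaneously keeping all promises accumulated from the $\omega$-many $p_n$ and respecting $h$; this is controlled precisely because promises are countable and the weaving of Lemma \ref{extensionlemma} respects a prescribed bound. By contrast, the successor step reduces cleanly to the inductive hypothesis together with the already-established properness construction, and so carries no essential new difficulty.
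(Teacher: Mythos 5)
Your overall strategy is the same as the paper's: induct on $\alpha$, reduce the successor case to the properness/master-condition construction, and at limits build a descending $\omega$-chain of conditions that are master conditions for longer and longer initial segments of the tower, with a bounding function threaded through so the chain has a lower bound of height $\delta_\alpha$. Your explicit inductive statement $(\star)_\alpha$ carrying a finite bound is a good formalization of what the paper leaves implicit, and your successor step is fine. But there is a genuine gap in your limit case, precisely at the point you identify as the crux. You first build the entire sequence $p_0 \geq p_1 \geq \cdots$, requiring only that each $p_{n+1}$ be bounded by $h \hook \delta_{\alpha_{n+1}}$, and only afterwards try to extend $\bigcup_n f_{p_n}$ to level $\delta_\alpha$ ``exactly as in Case II of Lemma \ref{extensionlemma}'' while keeping all promises in $\bigcup_n \Gamma_{p_n}$. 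That lemma does not apply here: its Case II has complete freedom to choose the intermediate extensions $\hat{f}_n$ so that they are bounded by a \emph{growing} finite function on the top level built from members of the requirements, whereas in your construction the intermediate values are already frozen by the time you reach the top level. Each $p_n$ keeps each promise, so for a given requirement $H$ of height $\delta_\alpha$ and finite $\tau$ each $f_{p_n}$ is bounded by \emph{some} $h' \in H$ disjoint from $\tau$ --- but the witness may vary with $n$, and since the values of the $f_{p_n}$ strictly increase along chains, the sequence can diagonalize against every member of $H$, leaving no single $h' \in H$ that bounds the union. Then no condition of height $\delta_\alpha$ extending $\bigcup_n f_{p_n}$ keeps the promise, and there is no lower bound.

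The fix is available inside your own framework and is what the paper means by ``iteratively applying Lemma \ref{submodel} as in the previous proof'': you must interleave. Enumerate all pairs (requirement of some $\Gamma_{p_m}$ at level $\delta_\alpha$, finite subset of $T_{\delta_\alpha}$) in order type $\omega$ with each pair appearing after its $p_m$ is defined; at stage $n$, choose a member $h'$ of the $n$-th requirement whose domain is disjoint from the $n$-th finite set and from the domain of the current accumulated bound, verify it bounds $f_{p_n}$ (possible since $p_n$ keeps the promise), enlarge the finite function $h_n \supseteq h_{n-1} \supseteq h$ on $T_{\delta_\alpha}$ accordingly, and then obtain $p_{n+1}$ by applying $(\star)$ for the next segment of the tower with the bound $h_n \hook \delta_{\alpha_{n+1}}$ (which still lies in $N_{\alpha_{n+1}+1}$, being a finite function on $T_{\delta_{\alpha_{n+1}}}$) rather than $h \hook \delta_{\alpha_{n+1}}$. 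With the witnesses fixed once and respected by all later conditions, the union extends to level $\delta_\alpha$ and keeps every accumulated promise. As written, though, your limit step does not secure uniform witnesses and the final extension can fail.
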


\begin{proof}
If $\alpha$ is a successor ordinal, this is just the proof of properness given above so assume that $\alpha$ is a limit ordinal. Pick an increasing sequence $\langle \alpha_n \; | \; n < \omega \rangle$ with ${\rm sup}_{n < \omega} \alpha_n = \alpha$. Let $\delta = \omega_1 \cap N_\alpha$. One can perform the same proof as when it was proved that $\mathbb P$ was proper, except now we insist (via the inductive assumption) that $p_i$ be $(N_j , \mathbb P)$-generic for all $j < i$ and $p_i \in N_i$ as opposed to $p_i$ being in some specified dense open. Since, by the definition of a tower $\langle N_j \; | \; j < i \rangle \in N_i$ this is possible (given the sequence, by elementarity, $N_i$ can find a master condition). Moreover, since, again by definition of a tower, the sequence of models is continuous and in particular, $N_\alpha = \bigcup_{n < \omega} N_{\alpha_n}$ the set $\{r \in N_\alpha \cap \mathbb P \; |\; \exists i \, p_i \leq r\}$ is $(N_\alpha, \mathbb P)$-generic. The only thing to be careful about is that the union of the $p_i$'s can be extended to some $q$ of height $\delta$. However, by iteratively applying Lemma \ref{submodel} as in the previous proof this is easily accounted for.
\end{proof}

Therefore $\mathbb P$ is dee-complete and ${<}\omega_1$-proper, thus proving Theorem \ref{specwidetree}. We get as an immediate corollary the following.
\begin{corollary}
Assume $\DCFA$. Every wide Aronszajn tree which embeds into an $\omega_1$-tree is special.
\end{corollary}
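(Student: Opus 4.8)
The plan is to reduce the corollary to a direct application of Theorem \ref{specwidetree} together with the forcing axiom $\DCFA$. Let $S$ be a wide Aronszajn tree and fix a tree embedding $e \colon S \to T$ into an $\omega_1$-tree $T$. Since $e$ preserves and reflects the tree order, its image $e[S]$, equipped with the suborder induced by $T$, is isomorphic to $S$ and hence is again a wide Aronszajn tree sitting inside $T$ with the induced suborder. I may therefore assume without loss of generality that $S \subseteq T$ is exactly the configuration of Theorem \ref{specwidetree}, which furnishes a poset $\mathbb P = \mathbb P_{S,T}$ that is dee-complete and ${<}\omega_1$-proper and whose generic object specializes $S$.

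Next I would identify the $\aleph_1$ dense sets to which $\DCFA$ should be applied. For each $\alpha < \omega_1$ set $D_\alpha = \{p \in \mathbb P : ht(p) \geq \alpha\}$. By Lemma \ref{extensionlemma} every condition can be extended arbitrarily high up the tree, so each $D_\alpha$ is dense (indeed dense open, since $q \leq p$ forces $f_q \supseteq f_p$ and hence $ht(q) \geq ht(p)$). As the family $\{D_\alpha : \alpha < \omega_1\}$ has size $\aleph_1$ and $\mathbb P$ lies in the class governed by $\DCFA$, the forcing axiom yields a filter $G \subseteq \mathbb P$ meeting every $D_\alpha$.

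From $G$ I would read off the specializing function by setting $f = \bigcup_{p \in G} f_p$. Since $G$ is a filter, any two conditions in it have a common extension whose specializing function refines both; hence the $f_p$ cohere and $f$ is a well-defined partial function from $S$ to $\mathbb Q$. Because $G$ meets every $D_\alpha$, the heights of conditions in $G$ are cofinal in $\omega_1$, so $\dom(f) \supseteq T_{\leq \alpha} \cap S$ for every $\alpha < \omega_1$; as every node of $S$ lies on some level $T_\gamma$, the function $f$ is total on $S$. Each $f_p$ is strictly increasing on linearly ordered subsets, and a chain in $S$ is a chain in $T$ because the order on $S$ is induced, so $f$ is strictly increasing on chains of $S$. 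Composing with an order isomorphism $\mathbb Q \cong \mathbb Q^+$ if one insists on positive values, $f$ witnesses that $S$ is special.

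The argument is essentially routine once Theorem \ref{specwidetree} is available; the two points requiring care are the reduction to the ``$S \subseteq T$'' configuration, where one must check that a tree embedding transports the wide-Aronszajn property to the induced suborder so that the hypotheses of the theorem are genuinely met, and the bookkeeping that the relevant dense sets are indexed by $\omega_1$ rather than by the potentially uncountable width of $S$. The latter is precisely what makes $\DCFA$ applicable: density of the $D_\alpha$ is governed by the $T$-height, which ranges over $\omega_1$, and not by the sizes of the levels of $S$.
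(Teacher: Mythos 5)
Your proposal is correct and matches the paper's intent exactly: the paper states this as an immediate corollary of Theorem \ref{specwidetree}, obtained by transporting $S$ into $T$ via the embedding, applying $\DCFA$ to $\mathbb P_{S,T}$ with the $\aleph_1$ dense sets $D_\alpha$ of conditions of height at least $\alpha$ (dense by Lemma \ref{extensionlemma}), and taking the union of the partial specializing functions along the resulting filter. Nothing further is needed.
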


We can also iterate this forcing with countable support to obtain the following (with no consistency strength). Note that under $\CH$ the forcing notion $\mathbb P$ has the $\aleph_2$-c.c. since any two conditions with the same partial specializing function are compatible. 

\begin{corollary}
It's consistent with $\CH$ that all wide Aronszajn trees which embed into an $\omega_1$-tree are special.
\end{corollary}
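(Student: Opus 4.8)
The plan is to perform a countable support iteration of length $\omega_2$ of the posets supplied by Theorem~\ref{specwidetree}, arranging by bookkeeping that every relevant tree is eventually specialized. I would first reduce to the case that the ground model $V$ satisfies $\GCH$ (force it if necessary; this adds no consistency strength). Then I would build a countable support iteration $\langle (\mathbb P_\alpha, \dot{\mathbb Q}_\alpha) \mid \alpha < \omega_2\rangle$ together with a bookkeeping function so that at each stage $\alpha$, $\dot{\mathbb Q}_\alpha$ is (a $\mathbb P_\alpha$-name for) the poset $\mathbb P_{\dot S_\alpha, \dot T_\alpha}$ of Theorem~\ref{specwidetree}, where $(\dot S_\alpha, \dot T_\alpha)$ is a handed-out pair consisting of a wide Aronszajn tree $\dot S_\alpha$ together with an $\omega_1$-tree $\dot T_\alpha$ and an embedding $\dot S_\alpha \hookrightarrow \dot T_\alpha$, as computed in $V^{\mathbb P_\alpha}$. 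By induction each $\mathbb P_\alpha$ has size $\leq \aleph_2$ under $\GCH$, and the bookkeeping is set up so that every such pair appearing at any stage $<\omega_2$ is handled at some later stage.

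Two facts then drive the argument. First, each iterand $\dot{\mathbb Q}_\alpha$ is forced to be dee-complete and ${<}\omega_1$-proper by Theorem~\ref{specwidetree}, so by Shelah's iteration theorem quoted above $\mathbb P_{\omega_2}$ is dee-complete and ${<}\omega_1$-proper, and in particular adds no reals; since $V \models \CH$ and no reals are added, $\CH$ holds in $V^{\mathbb P_{\omega_2}}$. Second, I would show that $\mathbb P_{\omega_2}$ has the $\aleph_2$-c.c. Granting this, $\aleph_2$ is preserved (and $\omega_1$ is preserved by properness), so the notions ``$\omega_1$-tree'' and ``wide Aronszajn tree'' are computed correctly in the extension; moreover every subset of $\omega_1$ in $V^{\mathbb P_{\omega_2}}$ has a nice name built from $\aleph_1$ conditions and hence lies in some $V^{\mathbb P_\alpha}$ with $\alpha<\omega_2$ (here using that the supports are countable and $\mathrm{cf}(\omega_2)>\aleph_1$).

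With these in hand I would finish as follows. Let $W = V^{\mathbb P_{\omega_2}}$ and let $S$ be, in $W$, a wide Aronszajn tree embedding into an $\omega_1$-tree $T$ via some $e$. Each of $S,T,e$ is coded by a subset of $\omega_1$, so by the previous paragraph there is $\alpha<\omega_2$ with $S,T,e \in V^{\mathbb P_\alpha}$. Since $V^{\mathbb P_\alpha} \subseteq W$ have the same reals and the same $\omega_1$, the levels of $T$ and $S$ are unchanged and countable, and since $S$ has no uncountable branch in $W$ it has none in the smaller model $V^{\mathbb P_\alpha}$; thus $T$ is an $\omega_1$-tree and $S$ a wide Aronszajn tree embedding into $T$ already in $V^{\mathbb P_\alpha}$, so the hypotheses of Theorem~\ref{specwidetree} hold there. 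By the bookkeeping, there is a stage $\beta \geq \alpha$ with $\dot{\mathbb Q}_\beta = \mathbb P_{S,T}$, so $S$ is special in $V^{\mathbb P_{\beta+1}}$, witnessed by some $f: S \to \mathbb Q$ strictly increasing on chains. Being strictly increasing on chains is upward absolute, so $f$ still witnesses that $S$ is special in $W$ (in particular no new uncountable branch through $S$ can appear, since every chain order-embeds into $\mathbb Q$ and is thus countable). Hence every such $S$ is special in $W$, as required.

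The main obstacle is the $\aleph_2$-c.c. of the iteration. Countable support iterations of $\aleph_2$-c.c. forcings need not be $\aleph_2$-c.c. in general, so the observation that any two conditions of $\mathbb P_{S,T}$ sharing a partial specializing function are compatible (which, under $\CH$, gives each iterand the $\aleph_2$-c.c., as there are only $\aleph_1$ possible specializing functions) must be upgraded to the iteration level. The clean way to do this is to verify that $\mathbb P_{S,T}$ satisfies Shelah's $\aleph_2$-properness isomorphism condition and invoke the preservation of this condition under countable support iteration assuming $\CH$; the relevant amalgamation of two conditions across an isomorphism of models is exactly powered by the same-$f_p$ compatibility together with Lemma~\ref{basis}. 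Alternatively one argues directly by a $\Delta$-system argument on the countable supports (valid since $\aleph_1^{\aleph_0}=\aleph_1<\aleph_2$ under $\CH$) followed by a coordinatewise amalgamation on the root.
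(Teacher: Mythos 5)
Your argument is correct and is essentially the paper's intended proof: the paper's entire justification for this corollary is the one-line remark that one iterates the posets of Theorem~\ref{specwidetree} with countable support and that under $\CH$ each iterand is $\aleph_2$-c.c.\ because any two conditions sharing a partial specializing function are compatible, which is exactly the bookkeeping-plus-chain-condition argument you carry out in detail. You in fact go beyond the paper in correctly observing that $\aleph_2$-c.c.\ of the individual iterands does not by itself yield $\aleph_2$-c.c.\ of the countable support iteration, and your proposed remedy via Shelah's $\aleph_2$-properness isomorphism condition (or a $\Delta$-system argument on the countable supports under $\CH$) is the standard and appropriate way to close that gap, though the p.i.c.\ verification for $\mathbb P_{S,T}$ would still need to be written out.
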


I will give a concrete application of such a tree in the next section. Let me note first that the condition is not trivial: there are wide Aronszajn trees in $\ZFC$ which cannot be embedded into $\omega_1$ trees.

\begin{lemma}(Essentially Todor\v{c}evi\'c, see \cite[Definition 3.2]{Todorcevic1981})
There is an $(\omega_1, {\leq}2^{\aleph_0})$-Aronszajn tree which is $\ZFC$-provably non-special, and cannot be specialized by any forcing not adding reals.
\label{lemmaT(S)}
\end{lemma}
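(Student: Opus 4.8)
The plan is to use the tree $T(S)$ from \cite{Todorcevic1981}. Fix a stationary, co-stationary set $S \subseteq \omega_1$ and let $T(S)$ be the tree whose nodes are the closed bounded subsets $c \subseteq \omega_1$ with $c \cap S = \emptyset$, ordered by end-extension: $c \leq_{T(S)} c'$ iff $c = c' \cap (\sup c + 1)$. The level of a node is its order type. I would then verify the three advertised properties in turn: that $T(S)$ is an $(\omega_1, {\leq}2^{\aleph_0})$-Aronszajn tree, that it is non-special provably in $\ZFC$, and that no real-preserving forcing specializes it.

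The width and Aronszajn verification is routine. Each node $c$ is a countable set of countable ordinals, hence coded by a real, so every level has size ${\leq}2^{\aleph_0}$; since for a fixed countable limit order type there are continuum-many such closed sets avoiding $S$, the bound is attained and the tree is genuinely wide. The height is $\omega_1$ because $\omega_1 \setminus S$, being stationary, contains closed copies of every countable ordinal, so every level is nonempty. Finally, a cofinal branch would union to a closed unbounded subset of $\omega_1$ disjoint from $S$, contradicting the stationarity of $S$; hence $T(S)$ has no uncountable branch.

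For non-specialness I would prove, in $\ZFC$, the equivalence that $T(S)$ is special if and only if $S$ is non-stationary, and apply the nontrivial direction. Assuming a specializing function $f : T(S) \to \mathbb{Q}$ (equivalently, a cover of $T(S)$ by countably many antichains), I would take a continuous $\in$-chain $\langle M_\xi \mid \xi < \omega_1 \rangle$ of countable elementary submodels of some $H_\theta$ containing $f, S, T(S)$, and use the stationarity of $S$ to select $\xi$ with $\delta := M_\xi \cap \omega_1 \in S$. A pressing-down/amalgamation argument inside $M_\xi$ would then produce two comparable nodes $c <_{T(S)} c'$ with $f(c) = f(c')$, contradicting that $f$ is strictly increasing on chains; the point is that $\delta \in S$ is exactly what blocks the diagonal continuation and forces the repetition of a value. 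This shows $T(S)$ is non-special in $\ZFC$.

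The forcing statement is where the main obstacle lies. Since a forcing $\mathbb{P}$ adding no reals adds no new countable subsets of $\omega_1$, it adds neither new nodes nor new branches of $T(S)$, so $T(S)^{V[G]} = T(S)^V$; moreover $\mathbb{P}$ preserves $\omega_1$ (collapsing $\omega_1$ would add a new real). By the $\ZFC$ equivalence applied in $V[G]$, $T(S)$ becomes special in $V[G]$ only if $S$ becomes non-stationary there, i.e.\ only if $V[G]$ contains a club disjoint from $S$. Thus everything reduces to showing that a forcing adding no reals cannot destroy the stationarity of $S$, and \emph{this preservation is the crux of the argument}. The delicate feature is that club-shooting through a \emph{fat} complement can be done with no new reals, so $S$ must be chosen (as in \cite{Todorcevic1981}) with a non-fat complement, ensuring that any attempt to add a club avoiding $S$ must add a real; I would establish this via the standard elementary-submodel analysis, choosing $M$ with $M \cap \omega_1 = \delta \in S$ and using the no-new-reals hypothesis to capture, already in $V$, the generic information witnessing $\delta$ as a limit of the putative club, thereby forcing $\delta \in \dot{C} \cap S$ and a contradiction. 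Carrying out this last step rigorously, and in particular securing the needed lower bound from the no-reals assumption rather than from full countable closure, is the hardest part of the proof.
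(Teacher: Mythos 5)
Your verification of the first two properties is essentially sound and close in spirit to the paper's, apart from a convention slip worth flagging: since your nodes are the closed bounded sets \emph{disjoint} from $S$, a cofinal branch contradicts the stationarity of $S$, but what obstructs a specializing function is the stationarity of $\omega_1\setminus S$; the correct equivalence for your tree is ``special iff $\omega_1\setminus S$ is non-stationary,'' not ``iff $S$ is non-stationary.'' The paper works with the complementary convention (nodes are closed bounded subsets \emph{of} $E$) and gets non-specialness more cheaply: the tree, viewed as a poset, is $\sigma$-distributive, hence preserves $\omega_1$ while adding a cofinal branch to itself, which no special Aronszajn tree survives. These differences are cosmetic.

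The genuine gap is exactly where you locate it, and the repair you propose cannot work. You reduce the third clause to: no forcing that adds no reals can add a club disjoint from $S$. That reduction target is false for \emph{every} stationary co-stationary $S\subseteq\omega_1$: the Baumgartner--Harrington--Kleinberg poset of closed bounded subsets of $\omega_1\setminus S$, ordered by end-extension, is $\sigma$-distributive (the standard elementary-submodel argument, using only the stationarity of $\omega_1\setminus S$, produces a lower bound for a descending $\omega$-sequence meeting any prescribed countable family of dense sets by closing off at a $\delta=M\cap\omega_1\in\omega_1\setminus S$), so it adds no reals, yet its generic union is a club avoiding $S$. The escape via a ``non-fat complement'' is unavailable on $\omega_1$: by H.~Friedman's theorem every stationary subset of $\omega_1$ contains closed copies of every countable ordinal inside every club, so every stationary subset of $\omega_1$ is fat. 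Worse, once such a club $D$ is added your tree really does become special: every node, being closed and disjoint from $D$, meets only finitely many intervals of $D$, and sorting nodes first by the number of intervals met and then by their trace off the last interval decomposes the tree into countably many antichains. So the step you call the crux is not merely hard but impossible along the proposed lines. For comparison, the paper's own treatment of this clause asserts that the reinterpreted poset ``is still $\sigma$-distributive'' in the extension, which tacitly presupposes that the relevant stationarity is preserved by real-preserving forcing --- the same unjustified point; your diagnosis of where the difficulty sits is accurate, but neither your argument nor the quoted one closes it.
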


\begin{proof}
Let $E \subseteq \omega_1$ be stationary co-stationary and let $T(E)$ be the tree of attempts to shoot a club through $E$. In other words, elements of $T$ are closed, bounded, countable initial segments of $E$ ordered by end extension. This poset is well known to be $\sigma$-distributive, hence the tree has height $\aleph_1$. Also, every element is a countable set of ordinals hence it can be coded by a real and therefore the tree has width $2^{\aleph_0}$. So we conclude that $T(E)$ is an $(\omega_1, {\le}2^{\aleph_0})$-Aronszajn tree. However, it can't be special, since, as mentioned before, forcing with this tree does not add reals, so in particular, $\omega_1$ is preserved. To see that it remains non-special in every forcing extension not adding reals, note that, if $\mathbb P$ does not add reals then the reinterpretation of $T(E)$ in $V^\mathbb P$ is just $\check{T(E)}$ so it's still $\sigma$-distributive and hence it must still not be special.
\end{proof}

Putting together this lemma and Theorem \ref{specwidetree} we conclude the following odd result which may be of independent interest. Note that the theorem below is provable in $\ZFC$.

\begin{theorem}
For any stationary $E \subseteq \omega_1$ the tree $T(E)$ cannot be embedded into any $\omega_1$-tree.
\label{embedthm}
\end{theorem}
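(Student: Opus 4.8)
The plan is to derive a contradiction from the existence of such an embedding by combining Theorem~\ref{specwidetree} with Lemma~\ref{lemmaT(S)}. So suppose toward a contradiction that $\iota : T(E) \to T'$ is an embedding of $T(E)$ into some $\omega_1$-tree $T'$, i.e. an injection with $s <_{T(E)} t \iff \iota(s) <_{T'} \iota(t)$, and let $S = \iota``T(E)$ carry the suborder induced from $T'$. Then $\iota$ is an order isomorphism of $T(E)$ onto $S$, so $S \cong T(E)$ as trees; in particular $S$ has height $\omega_1$, the same intrinsic level structure as $T(E)$, and no cofinal branch (since $T(E)$ has none, $E$ being co-stationary a branch would shoot a club through $E$ and miss the stationary set $\omega_1 \setminus E$).

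First I would verify that $S$ and $T'$ satisfy the hypotheses of Theorem~\ref{specwidetree}, the only delicate point being the width of $S$. On the one hand, $T(E)$ has an uncountable level: choosing $\lambda \in E$ which is a limit point of $E$ (such $\lambda$ exist, as $E$ is stationary and the limit points of $E$ form a club), we have $\cf(\lambda) = \omega$, and the $2^{\aleph_0}$ many cofinal subsets of $E \cap \lambda$ of order type $\omega$ yield, upon adjoining $\lambda$, that many distinct closed bounded subsets of $E$ of order type $\omega + 1$; hence level $\omega+1$ of $T(E)$, and so of $S$, has size $2^{\aleph_0} \geq \aleph_1$. Thus $S$ is genuinely a wide Aronszajn tree rather than an $\omega_1$-tree. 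On the other hand, since $S \subseteq T'$ and any $\omega_1$-tree has at most $\aleph_1$ nodes, we get $|S| \leq \aleph_1$, so every level of $S$ has size $\leq \aleph_1$ and $S$ is an $(\omega_1, {\leq}\omega_1)$-Aronszajn tree. (Incidentally, as $|T(E)| = 2^{\aleph_0}$, this forces $\CH$; when $\CH$ fails no injection $T(E) \to T'$ can exist at all.) Therefore $S \subseteq T'$ is a wide Aronszajn tree with the induced suborder and $T'$ is an $\omega_1$-tree, exactly the setup of Theorem~\ref{specwidetree}.

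Next I would apply Theorem~\ref{specwidetree} to obtain $\mathbb P = \mathbb P_{S,T'}$, a forcing notion specializing $S$ which is dee-complete and ${<}\omega_1$-proper. As observed after the definition of dee-completeness, dee-complete forcings add no new reals. Let $G$ be $\mathbb P$-generic over $V$; in $V[G]$ there is a specializing function $f : S \to \mathbb Q^+$, strictly increasing on chains of $S$. Since $\iota \in V \subseteq V[G]$ is an order isomorphism of $T(E)$ onto $S$, it carries chains to chains, so $f \circ \iota : T(E) \to \mathbb Q^+$ is strictly increasing on chains of $T(E)$, witnessing that $T(E)$ is special in $V[G]$. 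But $\mathbb P$ adds no reals, contradicting Lemma~\ref{lemmaT(S)}, which asserts that $T(E)$ cannot be specialized by any forcing that adds no reals. This contradiction shows that no embedding $\iota$ exists, proving the theorem.

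I expect the main obstacle to be the verification in the second paragraph, namely pinning down the width of $S$ from both sides: confirming that $T(E)$ really does have an uncountable level (so the theorem is non-vacuous, as otherwise $T(E)$ would be an $\omega_1$-tree embedding trivially into itself) while simultaneously noting that assuming an embedding into the $\omega_1$-tree $T'$ bounds the levels of $S$ by $\aleph_1$, so that $S$ legitimately falls under Theorem~\ref{specwidetree}. Once $S$ is seen to be a wide, induced Aronszajn suborder of $T'$, the remainder is a direct chaining of the two cited results through the fact that dee-complete forcing adds no reals.
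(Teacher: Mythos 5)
Your proof is correct and follows exactly the paper's route: assume an embedding into an $\omega_1$-tree, apply Theorem~\ref{specwidetree} to the image to specialize $T(E)$ without adding reals, and contradict Lemma~\ref{lemmaT(S)}. The paper states this in three lines; your extra verification that the image is a wide $(\omega_1,{\leq}\omega_1)$-Aronszajn suborder (and hence genuinely falls under Theorem~\ref{specwidetree}) is a useful elaboration of what the paper leaves implicit, not a different argument.
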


\begin{proof}
Suppose $T(E)$ could be embedded into an $\omega_1$-tree. Then, by forcing with the forcing from Theorem \ref{specwidetree} we could make $T(E)$ special without adding reals. But this contradicts Lemma \ref{lemmaT(S)}.
\end{proof}

\begin{corollary}
$\DCFA$ is consistent with the existence of non-special trees of size $\aleph_1$.
\label{cornonspec}
\end{corollary}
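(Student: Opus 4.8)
The plan is to produce a single model in which $\DCFA$ holds and which contains a non-special tree of cardinality exactly $\aleph_1$. The obvious candidate for the tree is $T(E)$ from Lemma~\ref{lemmaT(S)}: that lemma already gives us a $\ZFC$-provably non-special $(\omega_1,{\leq}2^{\aleph_0})$-Aronszajn tree, so the only real issue is cardinality. Since the nodes of $T(E)$ are closed bounded countable subsets of $\omega_1$, each coded by a real, the tree has size at most $2^{\aleph_0}$, and hence exactly $\aleph_1$ once we arrange $\CH$. So the whole corollary reduces to getting $\DCFA$ and $\CH$ to hold simultaneously.

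First I would invoke the consistency of $\DCFA + \CH$ recorded in Section~2: starting from a supercompact cardinal one runs the standard countable support iteration of dee-complete, ${<}\omega_1$-proper posets to force $\DCFA$, and by the iteration theorem this iteration adds no reals, so $\CH$ survives into the extension. Fixing such a model, I would then choose any stationary, co-stationary $E \subseteq \omega_1$ and form $T(E)$. By Lemma~\ref{lemmaT(S)} this is an $(\omega_1,{\leq}2^{\aleph_0})$-Aronszajn tree that is non-special in $\ZFC$, hence non-special in this model in particular; and since $\CH$ gives $2^{\aleph_0} = \aleph_1$, it is a tree of cardinality $\aleph_1$. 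This delivers a model of $\DCFA$ with a non-special tree of size $\aleph_1$, which is exactly the statement.

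The step requiring the most care is simply confirming there is no hidden conflict with the consequences of $\DCFA$ proved earlier. The application theorem only asserts that $\DCFA$ makes all \emph{$\omega_1$-trees} essentially special, i.e. trees with countable levels, whereas $T(E)$ is genuinely wide, with levels of size up to $2^{\aleph_0}$. Moreover, by Theorem~\ref{embedthm} the tree $T(E)$ cannot be embedded into any $\omega_1$-tree, so it falls outside the hypothesis of the specializing forcing of Theorem~\ref{specwidetree} and is never specialized by the $\DCFA$ machinery. Thus the only genuinely load-bearing ingredient is the consistency of $\DCFA$ with $\CH$, which is precisely what compresses the width $2^{\aleph_0}$ down to $\aleph_1$ and makes $T(E)$ a witness of the required size.
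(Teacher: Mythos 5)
Your argument is correct and is essentially the paper's own proof: the paper likewise takes the natural model of $\DCFA$ (in which $\CH$ holds) and observes that $T(E)$, being $\ZFC$-provably non-special and of width at most $2^{\aleph_0}=\aleph_1$, witnesses the corollary. Your additional remarks confirming that $T(E)$ does not conflict with the essential-specialness of $\omega_1$-trees are a sensible sanity check but not needed.
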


\begin{proof}
If $\CH$ holds, which it does in the natural model of $\DCFA$, then the tree $T(E)$ witnesses the corollary.
\end{proof}

\section{Kurepa Trees}

I now use the forcing from the previous section to provide an application of $\DCFA$. First let me note the following, which is Theorem 7.4 of \cite{BaumPFA} coupled with the remarks preceeding its statement on page 949 of the same article.
\begin{lemma}
Every essentially special tree has at most $\aleph_1$ many cofinal branches.
\end{lemma}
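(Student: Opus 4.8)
The plan is to exploit the single witnessing function together with the regularity of $\omega_1$ to attach to each cofinal branch a node that ``stabilizes'' it, and then to read off injectivity of this assignment directly from the defining property of essential specialness.

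First I would fix a function $f\colon T\to\mathbb Q$ witnessing that $T$ is essentially special, so $f$ is weakly increasing on chains and $s\leq_T t,u$ with $f(s)=f(t)=f(u)$ forces $t$ and $u$ to be comparable. Let $b$ be a cofinal branch and, for $\beta<\omega_1$, write $b(\beta)$ for the unique node of $b$ at level $\beta$. Since $\beta\mapsto f(b(\beta))$ is a weakly increasing map of $\omega_1$ into the countable set $\mathbb Q$, it takes only countably many values, each on an interval of ordinals; as $\omega_1$ is regular, one of these intervals must be a tail, so $f$ is eventually constant along $b$. Let $\alpha_b<\omega_1$ be least such that $f(b(\beta))=q_b$ for all $\beta\geq\alpha_b$, and set $s_b=b(\alpha_b)$, the stabilizing node of $b$; note $f(s_b)=q_b$.

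Next I would show that $b\mapsto s_b$ is injective. Suppose $b\neq b'$ are cofinal branches with $s_b=s_{b'}=:s$. Then $\alpha_b$ and $\alpha_{b'}$ both equal the level of $s$, and $q_b=f(s)=q_{b'}=:q$. For every $\beta$ above the level of $s$ we have $s\leq_T b(\beta)$ and $s\leq_T b'(\beta)$, together with $f(b(\beta))=q=f(b'(\beta))=f(s)$; essential specialness then makes $b(\beta)$ and $b'(\beta)$ comparable, and as they lie on the same level they are equal. Thus $b$ and $b'$ share a cofinal set of nodes, so as maximal chains they coincide, a contradiction. Hence distinct cofinal branches have distinct stabilizing nodes.

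Finally, since $b\mapsto s_b$ embeds the set of cofinal branches into $T$, the number of cofinal branches is at most $|T|$; as the levels of an $\omega_1$-tree are countable, $|T|\leq\aleph_1\cdot\aleph_0=\aleph_1$, which gives the bound. The two points needing care are the eventual-constancy step, which is where the regularity of $\omega_1$ enters, and the comparability-implies-equality step, which uses that two comparable nodes on a common level must be equal; I expect the former to be the only genuinely delicate part. I would also flag that countability of the levels is essential to the final count: without a bound on level sizes one can build an essentially special tree of height $\omega_1$ with $\aleph_2$ cofinal branches (for instance a single root carrying $\aleph_2$ pairwise disjoint chains of length $\omega_1$, with $f$ constant off the root), so the hypothesis that $T$ is an $\omega_1$-tree cannot be omitted.
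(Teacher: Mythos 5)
Your proof is correct and follows essentially the same route as the paper: both attach to each cofinal branch a node at which some rational value of $f$ recurs cofinally along the branch, and then use the defining clause of essential specialness to see that this assignment is injective into $T$, whence the bound $\aleph_1$ from the countable levels. The only cosmetic difference is that you use weak monotonicity plus regularity of $\omega_1$ to upgrade ``cofinally constant'' to ``eventually constant,'' where the paper simply applies pigeonhole and takes the least node realizing the recurring value; your closing remark that countability of the levels is genuinely needed is a correct observation about a hypothesis the paper leaves implicit in its intended application to $\omega_1$-trees.
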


\begin{proof}
Suppose $T$ is an essentially special tree as witnessed by $f:T \to \mathbb Q$. Let $\mathcal B$ be the set of uncountable branches through $T$. From $f:T \to \mathbb Q$ we can define an injection $g:\mathcal B \to T$ as follows. For each $b \in \mathcal B$ by pigeonhole there is some $r \in \mathbb Q$ so that $\{t \in b \; |\; f(t) = r\}$ is cofinal in $b$. Pick such an $r$ and let $g(b)$ be the least $t \in b$ with $f(t) = r$ (or indeed any such $t$). By the definition of $f$, if $b_1 \neq b_2$ then $g(b_1) \neq g(b_2)$. To see this, suppose that $g(b_1) = g(b_2) = s$, let $f(s) = r$ and let $t \in b_1 \setminus b_2$ with $f(t) = r$ and $u \in b_2 \setminus b_1$ with $f(u) = r$. Such $t$ and $u$ exist by the assumption on $r$. But this is a contradiction since we have that $s \leq_T t, u$ with $f(s) = f(t) = f(u)$ and $t$ and $u$ are incomparable. Therefore $g$ is an injection from $\mathcal B$ into $T$ so $|\mathcal B| \leq \aleph_1$.
\end{proof}

\begin{theorem}
Under $\DCFA$ all $\omega_1$-trees are essentially special and hence there are no Kurepa Trees.
\label{DCFAimpliesnotKH}
\end{theorem}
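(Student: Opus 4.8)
The plan is to prove the stronger first assertion---that under $\DCFA$ every $\omega_1$-tree $T$ is essentially special---and then obtain the non-existence of Kurepa trees as a formal consequence. Indeed, by the preceding lemma an essentially special tree has at most $\aleph_1$ cofinal branches, while a Kurepa tree has by definition more than $\aleph_1$; so no $\omega_1$-tree can be both essentially special and Kurepa, and once the first clause is known for all $\omega_1$-trees the second clause is immediate. Thus the entire content is the construction, for an arbitrary $\omega_1$-tree $T$, of a function $f\colon T\to\mathbb Q$ that is weakly increasing on chains and satisfies the separation clause: if $s\leq_T t,u$ and $f(s)=f(t)=f(u)$ then $t$ and $u$ are comparable. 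The engine driving the construction is the corollary already extracted from Theorem~\ref{specwidetree}: under $\DCFA$ every wide Aronszajn tree that embeds into an $\omega_1$-tree is special.

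The first thing I would record is that $T$ carries a canonical wide Aronszajn subtree off its cofinal branches. Write $T_{\mathrm{br}}$ for the downward closed set of nodes lying on some cofinal branch of $T$; then its complement $A=T\setminus T_{\mathrm{br}}$ is upward closed, and with the order induced from $T$ it has no cofinal chain of length $\omega_1$ (such a chain would generate a cofinal branch of $T$ passing through its members, contradicting $A\cap T_{\mathrm{br}}=\emptyset$). Since the rank of a node \emph{within} $A$ counts only those of its predecessors that already lie in $A$, the levels of $A$ can be uncountable, so $A$ is a wide Aronszajn tree and the inclusion $A\hookrightarrow T$ embeds it into an $\omega_1$-tree. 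Hence $A$ is special by the corollary, and the resulting strictly increasing specializing function already handles every node of $T$ lying off a cofinal branch.

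The remaining and genuinely delicate part is $T_{\mathrm{br}}$, where every node sits on a cofinal branch and so the naive ``peel off the non-branch part'' step stalls. Here I would run a Cantor--Bendixson style recursion $\langle T_\xi\rangle$, setting $T_0=T$, letting $T_{\xi+1}$ consist of those nodes above which $T_\xi$ still has at least two incomparable cofinal branches, and intersecting at limits. At each successor stage the peeled layer $T_\xi\setminus T_{\xi+1}$ has no everywhere-splitting part: its cofinal branches are mutually isolated---each is eventually the unique branch above one of its nodes, so there are at most $\aleph_1$ of them and they inject into $T$---while off these branches the layer is again wide Aronszajn and thus special by the corollary. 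Assigning the isolated branches eventually constant values, separated below their isolating nodes by the layer's own specializing function, makes each layer essentially special, and placing successive layers into disjoint, increasingly positioned rational intervals renders the separation clause automatic across distinct stages (the hypothesis $f(s)=f(t)=f(u)$ then forces $s,t,u$ into a single layer) and reduces it within a stage to the essential specialness already arranged there.

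The step I expect to be the main obstacle is showing that this recursion exhausts $T$, i.e.\ that the stable value $T_\infty$ of everywhere-splitting nodes is empty: a nonempty such kernel is exactly an $\omega_1$-subtree along which splitting persists cofinally, which carries far more than $\aleph_1$ branches and would defeat essential specialness. This is the point at which one must use $\DCFA$ to specialize the wide Aronszajn trees naturally associated with a would-be kernel and derive a contradiction, and it is precisely where Theorem~\ref{specwidetree} does the decisive work---supplying the required specializations of arbitrarily wide trees directly, and thereby dispensing with the cardinal-arithmetic hypothesis ($\CH$ together with $2^{\aleph_1}=\aleph_2$) under which Shelah originally carried out this reduction. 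Once the kernel is shown to be empty, the transfinite union of the layer functions (together with the function on $A$) is a single $f\colon T\to\mathbb Q$, and a routine check of weak monotonicity on chains and of the separation clause completes the proof that $T$ is essentially special.
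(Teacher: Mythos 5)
The decisive step of your outline --- that the Cantor--Bendixson recursion exhausts $T$, i.e.\ that the kernel $T_\infty$ of everywhere-splitting nodes is empty --- is false, and no appeal to $\DCFA$ can repair it. Consider $T=\{s\in 2^{<\omega_1} : s \text{ takes the value } 1 \text{ only finitely often}\}$. This is an $\omega_1$-tree (level $\alpha$ is in bijection with the finite subsets of $\alpha$); it has exactly $\aleph_1$ cofinal branches (the eventually-zero elements of $2^{\omega_1}$); every node lies on a cofinal branch and has two incomparable cofinal branches above it (append $0$ or $1$ and then continue with zeros). Hence $T_1=T$ and so $T_\infty=T\neq\emptyset$, yet $T$ has only $\aleph_1$ branches and is essentially special under $\DCFA$ by the very theorem being proved. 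So a nonempty kernel neither ``carries far more than $\aleph_1$ branches'' nor ``defeats essential specialness,'' and the wide Aronszajn trees you hope to extract from a would-be kernel need not exist (in this example every node of the kernel lies on a cofinal branch, so the off-branch part is empty). The argument is also circular at exactly this point: you invoke the failure of essential specialness to kill the kernel, while essential specialness is the conclusion. Two further problems: the recursion can have uncountably many nontrivial stages, but there are only countably many pairwise disjoint rational intervals, so ``placing successive layers into disjoint intervals'' cannot be carried out; and within a single layer you may have $\aleph_1$ isolated branches sharing countably many eventual values, so the separation clause between two such branches is not routine --- it is precisely the content of Baumgartner's Lemma 7.7, which you are implicitly re-proving.

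The paper's proof is structured quite differently and avoids all of this. It first forces with $Col(\lambda,\aleph_1)$, where $\lambda$ is the number of cofinal branches of $T$; this collapse is $\sigma$-closed, hence dee-complete and ${<}\omega_1$-proper, and adds no new branches to $T$, so in the extension $T$ has at most $\aleph_1$ branches. Only then does Baumgartner's Lemma 7.7 apply, producing a \emph{single} subtree $S\subseteq T$ with no uncountable branches (possibly wide, since its levels amalgamate many $T$-levels) such that specializing $S$ makes $T$ essentially special. One then forces with $\mathbb P_{T,S}$ from Theorem \ref{specwidetree} and applies $\DCFA$ to the composed iteration to pull an essential specializing function back to $V$. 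The collapse is the step your proposal omits, and it is exactly what makes the branch bookkeeping work; your attempt to substitute a derivative analysis of the branch structure for it does not go through.
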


Note that \cite[Chapter VII, Application G]{PIPShelah} proves the same thing under the additional assumptions that $2^{\aleph_0} = \aleph_1$ and $2^{\aleph_1} = \aleph_2$. What is new is that the cardinal arithmetic is unnecessary. The proof also gives more information since it shows that certain wide trees are special under $\DCFA$. The proof of Theorem \ref{DCFAimpliesnotKH} follows Baumgardner's original proof from $\PFA$, however using the poset from Theorem \ref{specwidetree}. The argument is sketched with the reader referred to \cite[Section 7]{BaumPFA} for more details. 

\begin{proof}
Assume $\DCFA$ and let $T$ be an $\omega_1$-tree. Let $\lambda \geq \aleph_2$ be the number of branches through $T$. First, force with $Col (\lambda, \aleph_1)$, the $\sigma$-closed forcing to collapse $\lambda$ to $\aleph_1$. Note that, being $\sigma$-closed, this is dee-complete and ${<}\omega_1$-proper. Work in the collapse extension. As noted in Lemma 7.11 of \cite{BaumPFA} $\sigma$-closed forcing won't add uncountable branches to a tree of width $<2^{\aleph_0}$ hence, in particular, there are no new branches added to to $T$ in the extension so there are now at most $\aleph_1$ many branches.

I use the following claim, due to Baumgartner, see \cite[Lemma 7.7]{BaumPFA}.
\begin{claim}
There is an uncountable subtree $S \subseteq T$ with no uncountable branches and, by specializing it, we obtain that $T$ is essentially special i.e. there is an $f:T \to \mathbb Q$ which is (weakly) increasing on chains and for all $s, t, u \in T$ if $s \leq_T t, u$ and $f(s) = f(t) = f(u)$ then $t$ and $u$ are comparable.
\end{claim}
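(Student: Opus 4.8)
The plan is to reduce essential specialness of $T$ to the existence of a specializing function on a subtree $S$ extracted from the branch structure of $T$, following Baumgartner. Working in the collapse extension, where $T$ has at most $\aleph_1$ uncountable branches, I would first enumerate these branches as $\langle b_\alpha \mid \alpha < \omega_1\rangle$ and define $\phi\colon T \to \omega_1 \cup \{\infty\}$ by letting $\phi(t)$ be the least $\alpha$ with $t \in b_\alpha$, and $\phi(t)=\infty$ if $t$ lies on no uncountable branch. Two facts drive everything. First, $\phi$ is weakly increasing along $\leq_T$-chains, since the set of branches through a node can only shrink as one moves up. Second, for each $\alpha$ the set $C_\alpha := \phi^{-1}(\alpha)$ is contained in $b_\alpha$, hence is a chain, and it is a nonempty final segment of $b_\alpha$: the countably many earlier branches $b_\gamma$ ($\gamma < \alpha$) each split off $b_\alpha$ at a countable level, and the supremum of these levels is below $\omega_1$. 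Let $m_\alpha := \min C_\alpha$ and $T_\infty := \phi^{-1}(\infty)$.

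I would then take $S := \{m_\alpha \mid \alpha < \omega_1\} \cup T_\infty$ with the induced suborder. The crucial point is that $S$ has no uncountable branch: any uncountable chain of $S$ lies inside some $b_\gamma$, but $T_\infty \cap b_\gamma = \emptyset$, while $m_\alpha \in b_\gamma$ forces $\alpha = \phi(m_\alpha) \leq \gamma$, so only countably many $m_\alpha$ meet $b_\gamma$. Since each level of $T$ meets $S$ in a countable set, $S$ is an $(\omega_1,{\leq}\aleph_1)$-Aronszajn subtree of the $\omega_1$-tree $T$, and its induced levels may be uncountable, i.e.\ it is genuinely wide; this is exactly where the wide-tree specialization is needed. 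Thus Theorem~\ref{specwidetree} applies and, under $\DCFA$, $S$ is special, while if $S$ happens to be bounded it is countable and trivially special.

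It remains to convert a specializing function $g\colon S \to \mathbb Q$ (strictly increasing on $S$-chains) into a function $f\colon T \to \mathbb Q$ witnessing essential specialness. I would set $f(t) = g(m_{\phi(t)})$ when $\phi(t) < \infty$ and $f(t) = g(t)$ when $t \in T_\infty$. The verification rests on one structural observation: every $\leq_T$-predecessor of a node on $b_\gamma$ is again on $b_\gamma$, so no node of $T_\infty$ ever lies below a node of $T_b := T \setminus T_\infty$, and for $s \leq_T t$ with finite $\phi$-values one gets $m_{\phi(s)} \leq_T m_{\phi(t)}$. Weak monotonicity of $f$ is then a short case check (both nodes in $T_b$, both in $T_\infty$, or $s \in T_b$ below $t \in T_\infty$, the reverse mixed case being impossible), each case reducing to monotonicity of $g$ on an $S$-chain.

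For the comparability requirement, suppose $s \leq_T t,u$ and $f(s)=f(t)=f(u)=r$. If $s \in T_\infty$ then $t,u \in T_\infty$ as well, and strictness of $g$ on the $S$-chain through $s$ forces $s=t=u$. If $s \in T_b$, then $m_{\phi(s)} \leq_T m_{\phi(t)}$ and $m_{\phi(s)} \leq_T m_{\phi(u)}$ are comparable pairs of elements of $S$ with equal $g$-value, so strictness gives $m_{\phi(s)} = m_{\phi(t)} = m_{\phi(u)}$, hence $\phi(t)=\phi(u)=\phi(s)$ and $t,u \in C_{\phi(s)} \subseteq b_{\phi(s)}$ are comparable; the mixed configurations (say $u \in T_\infty$ while $s \in T_b$) are vacuous, since they would force an element of $T_\infty$ to equal some $m_{\phi(s)} \in T_b$. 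I expect the main obstacle to be precisely this last bundle of points, namely proving that $S$ carries no uncountable branch and that strictness of $g$ is exactly strong enough to collapse the $\aleph_1$ branch-colors given by $\phi$ down to the comparability condition, since this is where the design of $\phi$, of the minimal nodes $m_\alpha$, and of $S$ must all fit together.
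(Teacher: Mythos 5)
Your construction is correct: the paper itself gives no proof of this claim, deferring entirely to Baumgartner's Lemma~7.7 in \cite{BaumPFA}, and what you have written is precisely that argument (colour each node by the least branch containing it, take the minimal node of each colour class together with the branchless part, and transfer a specializing function on this subtree back to $T$ via those minima). All the verifications --- that $C_\alpha$ is a nonempty final segment of $b_\alpha$, that $S$ meets each $b_\gamma$ in a countable set, and that strictness of $g$ on $S$-chains yields the comparability clause --- check out, so the proposal faithfully fills in the citation rather than diverging from it.
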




Thus, by applying the specializing forcing $\mathbb P_{T, S}$ from Theorem \ref{specwidetree} to $S$ and working in that extension we have that $S$ is special and so $T$ is essentially special. Now, finally applying $\DCFA$ we can pull back to $V$ and find an $f:T \to \mathbb Q$ witnessing that $T$ is essentially special, so we're done. 

\end{proof} 



In contrast with the case of $\PFA$, note that Corollary \ref{cornonspec} this result cannot be improved to trees of width $\omega_1$. 

\section{Cardinal Characteristics and Open Problems}

The previous sections suggest some new directions for studying wide trees, particularly in connection with cardinal characteristics. While I leave an indepth investigation of these ideas for future research I want to finish this note by recording some easy observations and connecting them back to what has been shown.

The main observation is that the behavior of trees is as much connected to their width and cardinality as to their height. This is obscured by the fact that the ccc forcing to specialize a tree works equally well regardless of the width of the tree. However, the trees of the form $T(E)$ suggest that there is something more subtle going on with regards to specializing wider trees. The following cardinals attempt to measure this.

\begin{definition}
\begin{enumerate}
\item
$\mathfrak{st}$, the $\mathfrak{s}$pecial $\mathfrak{t}$ree number, is the least cardinal $\lambda$ such that there is a non-special $(\omega_1, {\leq}\lambda)$-Aronszajn tree of cardinality $\lambda$.
\item
$\mathfrak{no}$, the $\mathfrak{no}$ new reals number, is the least cardinal $\lambda$ of an $(\omega_1, 
{\leq}\lambda)$-Aronszajn tree of cardinality $\lambda$ which can be forced to be special without adding reals.
\end{enumerate}
\end{definition}

Let's make some easy observations.
\begin{observation}
$\aleph_1 \leq \mathfrak{st} \leq \mathfrak{no}\leq \mfc$
\end{observation}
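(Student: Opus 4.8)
The statement to prove is the chain of inequalities
\[
\aleph_1 \leq \mathfrak{st} \leq \mathfrak{no} \leq \mfc.
\]
I will establish each inequality separately, and the plan is to check that all three follow from the definitions together with results already available in the excerpt.

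\textbf{The inequality $\aleph_1 \leq \mathfrak{st}$.} This should be essentially immediate from the definition of a special tree. An $(\omega_1, {\leq}\lambda)$-Aronszajn tree of cardinality $\lambda$ with $\lambda < \aleph_1$, i.e.\ a countable such tree, cannot exist as a genuine Aronszajn tree: a tree of height $\omega_1$ necessarily has uncountably many nodes (one on each of uncountably many levels), so $\lambda \geq \aleph_1$ for \emph{any} $(\omega_1, {\leq}\lambda)$-Aronszajn tree of cardinality $\lambda$, special or not. Hence the least $\lambda$ witnessing non-specialness is at least $\aleph_1$. So I would simply observe that there is no $(\omega_1, {\leq}\lambda)$-Aronszajn tree of cardinality $\lambda < \aleph_1$ at all.

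\textbf{The inequality $\mathfrak{st} \leq \mathfrak{no}$.} The key point here is that any tree which can be forced to be special without adding reals is already non-special in $V$ — in fact it must be non-special, for otherwise it would not be a meaningful witness. More carefully, I would argue as follows: let $\lambda = \mathfrak{no}$ be witnessed by an $(\omega_1, {\leq}\lambda)$-Aronszajn tree $T$ of cardinality $\lambda$ which becomes special in some extension $V[G]$ adding no reals. If $T$ were already special in $V$, witnessed by a specializing function $f : T \to \mathbb{Q}^+$, then this same $f$ would witness specialness in $V[G]$ trivially, making the extension unnecessary; the content of the definition of $\mathfrak{no}$ is that $T$ is \emph{not} special in $V$ (this is the nontrivial content of ``can be forced to be special,'' as opposed to ``is special''). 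Thus $T$ is a non-special $(\omega_1, {\leq}\lambda)$-Aronszajn tree of cardinality $\lambda$ in $V$, which witnesses $\mathfrak{st} \leq \lambda = \mathfrak{no}$. The main subtlety I would want to nail down is exactly how to read the phrase ``can be forced to be special without adding reals'' in the definition of $\mathfrak{no}$, and whether it implicitly requires non-specialness in $V$; assuming the intended reading (that the tree is non-special but can be made special by such a forcing), the inequality is immediate.

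\textbf{The inequality $\mathfrak{no} \leq \mfc$.} This is the only inequality requiring a construction, and it is where Theorem \ref{specwidetree} does the work. I would invoke Lemma \ref{lemmaT(S)}: for stationary co-stationary $E \subseteq \omega_1$, the tree $T(E)$ of closed bounded initial segments of $E$ is an $(\omega_1, {\leq}2^{\aleph_0})$-Aronszajn tree of cardinality $2^{\aleph_0} = \mfc$. However, Lemma \ref{lemmaT(S)} says precisely that $T(E)$ \emph{cannot} be specialized by any forcing adding no reals, so $T(E)$ itself is the wrong witness. Instead, I would produce a \emph{different} $(\omega_1, {\leq}\mfc)$-Aronszajn tree of cardinality $\mfc$ that \emph{can} be so specialized — for instance, take any ordinary (thin) Aronszajn tree $S_0$ of size $\aleph_1$, which embeds into an $\omega_1$-tree (itself) and hence, by Theorem \ref{specwidetree}, admits a dee-complete ${<}\omega_1$-proper forcing specializing it without adding reals. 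This gives $\mathfrak{no} \leq \aleph_1 \leq \mfc$. I expect this last inequality to be the main obstacle, precisely because one must resist using $T(E)$ (the natural ``width $\mfc$'' example) and instead exhibit a tree that Theorem \ref{specwidetree} applies to; the cleanest route is to note that a thin Aronszajn tree trivially satisfies the hypothesis of Theorem \ref{specwidetree} and can be specialized without adding reals, so the least cardinality $\mathfrak{no}$ is bounded by $\aleph_1 \leq \mfc$.
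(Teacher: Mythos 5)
Your first inequality is fine, and your instinct that the printed definition of $\mathfrak{no}$ cannot be taken entirely at face value is correct --- but you resolved the ambiguity in the wrong direction, and this sinks your third inequality. The paper's own proof fixes the intended reading unambiguously: it says $T(E)$ ``witnesses that there is always a tree of size continuum that \emph{cannot} be specialized without adding reals,'' so $\mathfrak{no}$ is meant to be the least cardinality of a wide Aronszajn tree that \emph{cannot} be forced to be special without adding reals (the definition as printed is missing a ``not''). Under that reading the middle inequality is just the contrapositive of the remark you yourself make: a special tree is ``specialized'' by trivial forcing, which adds no reals, so any witness to $\mathfrak{no}$ is non-special and hence also witnesses $\mathfrak{st}\leq\mathfrak{no}$. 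And the third inequality is exactly Lemma \ref{lemmaT(S)}: $T(E)$ has cardinality at most $\mfc$ and cannot be specialized by any forcing not adding reals, so it is the \emph{right} witness, not the wrong one.

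Your argument for $\mathfrak{no}\leq\mfc$ therefore proves the wrong statement. A thin Aronszajn tree is precisely a tree that \emph{can} be specialized without adding reals (Theorem \ref{specwidetree}, or already Shelah's original poset), so it can never witness $\mathfrak{no}$ in the intended sense. A decisive sanity check: your chain yields $\mathfrak{no}\leq\aleph_1\leq\mathfrak{st}\leq\mathfrak{no}$, i.e.\ $\mathfrak{st}=\mathfrak{no}=\aleph_1$ outright in $\ZFC$. That is false --- under $\MA+\neg\CH$ every Aronszajn tree of cardinality less than $2^{\aleph_0}$ is special, so $\mathfrak{st}=\mfc>\aleph_1$ there --- and it would also trivialize the open questions the paper raises immediately afterward (e.g.\ whether $\mathfrak{no}<\mfc$ is consistent). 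Even on your own charitable reading of the definition (``non-special in $V$ but specializable without adding reals''), your proposed witness fails: you never check that the thin Aronszajn tree is non-special, and under $\MA+\neg\CH$ no non-special tree of size $\aleph_1$ exists, so your bound $\mathfrak{no}\leq\aleph_1$ could not be established that way either.
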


\begin{proof}
That $\mathfrak{st}$ is uncountable is essentially by definition. To see that $\mathfrak{st}\leq \mathfrak{no}$ it suffices to note that any special tree is obviously specializable without adding reals (by trivial forcing). Finally Todorcevic's tree $T(E)$ defined above witnesses that there is always a tree of size continuum that cannot be specialized without adding reals.
\end{proof}

I do not know exactly what these cardinals can be. It's clear that $\mathfrak{st}$ can remain $\aleph_1$ in models where many other cardinal characteristics are big since nearly all known cardinal characteristics can be made to have size continuum ($\neq \aleph_1$) while preserving the existence of a Souslin tree since we can make all cardinals (except $\mathfrak{m}$) in the Cicho\'n and Van Douwen diagrams large using $\sigma$-linked forcing. The following however is less clear.
\begin{question}
What provable bounds exist between known cardinal invariants and $\mathfrak{st}$? For instance, is it provable that $\mathfrak{st} \leq \mfd$? 
\end{question}
The number $\mathfrak{no}$ seems even more mysterious. I do not even know if it can consistently be less than the continuum.
\begin{question}
Is it consistent that $\mathfrak{no} < \mfc$? Is it consistent that $\mathfrak{st} < \mathfrak{no}$?
\end{question}
A potentially easier question, for which I conjecture the answer is ``yes" is the following:
\begin{question}
Does $\DCFA$ imply that $\mathfrak{no} = \mfc$?
\end{question}

Finally let me ask about extensions of the main theorem of this note.
\begin{question}
Are there (in $\ZFC$) trees which can be specialized without adding reals but are not embeddible into $\omega_1$-trees?
\end{question}

The use of forcing notions which specialize wide trees is key in several important applications of $\PFA$ including failure of various square principles, and the tree property on $\omega_2$. Therefore a natural question is whether the forcing $\mathbb P_{T,S}$ can be substituted in in these arguments.
\begin{question}
What other consequences of $\DCFA$ (possibly with some additional cardinal arithmetic assumption) can be obtained using $\mathbb P_{T,S}$? Does $\DCFA + \neg \CH$ imply the tree property on $\omega_2$? Does it imply the failure of weak square on $\omega_1$?
\end{question}

\noindent {\bf Acknowledgements} \; \; This reasearch was supported by a CUNY mathematics fellowship and the author would like to thank the mathematics department at the Graduate Center at CUNY for this. Some of the results here also appear in a different form as part of the author's PhD thesis. The author would like to thank his advisor, Professor Gunter Fuchs for suggesting the problem and many helpful discussions.

This paper was intended for the conference procedings of the RIMS Set Theory 2019 conference organized by Professor Daisuke Ikegami. The author would like to thank Professor Ikegami for organizing such a wonderful conference. Also, the author would like to thank Professor Hiroshi Sakai for funding his visit to the conference and making the trip to Japan possible.

\bibliographystyle{plain}
\bibliography{Logicpaperrefs}

\end{document}